\newcommand*\circled[1]{\tikz[baseline=(char.base)]{
            \node[shape=circle,draw,inner sep=2pt] (char) {#1};}}
\newtheorem{thm}{Theorem}[section]
\newtheorem{cor}[thm]{Corollary}
\newtheorem{remark}[thm]{Remark}
\newtheorem{definition}[thm]{Definition}
\newtheorem{example}[thm]{Example}
\newtheorem{Proposition}[thm]{Proposition}
\newtheorem{Lemma}[thm]{Lemma}
\newtheorem{Cor}[thm]{Corollary}
\newcommand{\mathset}[1]{{\left\{#1\right\}}}
\newcommand{\absolute}[1]{\left\lvert#1\right\rvert}
\newcommand{\norm}[1]{\left\|#1\right\|}
\DeclareMathOperator{\id}{id}
\DeclareMathOperator{\supp}{supp}
\DeclareMathOperator{\Range}{Ran}
\title{Diffusion operators on $p$-adic analytic manifolds}
\author{Patrick Erik Bradley
\\
Institute of Photogrammetry and Remote Sensing (IPF)
\\
Geodetic Institute Karlsruhe (GIK)
\\
Karlsruhe Institute of Technology (KIT)
\\
Email: bradley@kit.edu}
\date{\today}
\begin{document}

\maketitle

%%%%%%%%%%%%%%%%%%%%%%%%%%%%%%
\begin{abstract}
Kernel functions for 
Laplacian integral operators are constructed on $p$-adic analytic manifolds using charts and transition maps from an atlas with connected nerve complex. In the compact case, an operator of Vladimirov-Taibleson type  parametrised by a real parameter $s$ is defined. Its kernel function uses a geodetic-like distance function on the nerve complex of its atlas.
The $L^2$-spectrum of this operator is established, and it is shown that it gives rise to a Feller semigroup. In this way, the Cauchy problem for the corresponding heat equation is solved in the positive by a transition function of a Markov process. The existence of a heat kernel function and a Green function in the case $s>1$ is proven. As an application, it is shown how to express the number of points on the reduction curve defined over the residue field of an elliptic curve with good reduction in terms of the eigenvalues of a Vladimirov-Taibleson-like operator. This provides for an alternative way of counting points on elliptic curves defined over finite fields.
\end{abstract}

\tableofcontents
%%%%%%%%%%%%%%%%%%%%%%%%%%%%%%%%%%
\section{Introduction}

There  is meanwhile a lot of literature on diffusion on $p$-adic domains in a variety of different guises, the starting point being given by the authors of \cite{Taibleson1975,VVZ1994}. Many authors are concerned with the space $\mathds{Q}_p^n$ for $n\ge 1$, where $\mathds{Q}_p$ stands for the $p$-adic number field, e.g.\ \cite{RW2023}. A deep result in this context is that $p$-adic Brownian motion is a scaling limit \cite{Weisbart2024}. One also finds literature on diffusion on compact subdomains of $K^n$, where $K$ is a non-archimedean local field, e.g.\ \cite{Kochubei2018,PW2024,ZunigaNetworks}. Other non-archimedean spaces on which diffusion is studied are vector spaces over local fields \cite{PRSWY2024}, the finite ad\`eles \cite{Urban2022}, Mumford curves \cite{brad_thetaDiffusionTateCurve,SchottkyDiff} and their finite quotients \cite{brad_HeatMumf}. 
\newline

Diffusion operators are even studied on non-commutative $p$-adic Lie groups through their unitary representation theory \cite{EngelGroup,UnitaryDual_p}. More general ultrametric spaces are also studied in this context with a plethora of diffusion Laplacians or sub-Laplacians being used \cite{BGPW2014,VPzeta}. The former reference inspired the construction of directional Laplacians on $p$-adic Lie groups \cite{RodriguezDiss}, and lead to $p$-adic weak imitations of classical partial differential operators in order to study boundary value problems on compact $p$-adic domains  \cite{ellipticBVP}. The reference \cite{VPzeta} extended ideas from \cite{PB2009} and found applications on the transcendent $p$-adic points of Shimura curves \cite{SchottkyTrans_p}. The study of time-dependent $p$-adic diffusion is
initiated in \cite{nonAutonomousDiffusion,Ledezma_Energy}. Recent applications of diffusion operators consist in the extraction of topological and geometric information from $p$-adic domains \cite{BL_shapes_p,brad_thetaDiffusionTateCurve,HearingGenusMumf}.
\newline

Diffusion is mostly understood through a Markov process. Over $p$-adic fields, these are also studied \cite{Zuniga2008}. More general $p$-adic stochastic processes are in the focus of \cite{Zuniga2017}. Their relation to $p$-adic pseudodifferential equations is extensively laid out in the recent book \cite{Zuniga2025}.
\newline

All this having been said, defining a
diffusion operator on $p$-adic analytic manifolds in the sense of \cite{Serre1964,Schneider2011} using charts and transition functions is still lacking. The scope of this article is to fill this gap, and to study the case of $n$-dimensional compact $p$-adic analytic manifolds, since they are always endowed with a nowhere vanishing analytic differential $n$-form according to \cite{Serre1965}. This is an asset, because from \cite{WeilAAG,IgusaLocalZeta} one is informed that  such a differential form naturally defines a measure on the manifold.
For example, elliptic curves have nowhere vanishing invariant differential $1$-forms, and such can be used also in the case of an elliptic curve $E$ with good reduction to provide for a measure on its $K$-rational points $E(K)$, where $K$ is a $p$-adic local field. This approach allows to extend the construction of diffusion processes of Tate elliptic curves of \cite{brad_thetaDiffusionTateCurve} via Laplacian integral operators in an entirely different manner to the case of elliptic curves $E$ over $K$ with good reduction. As an application, the number of $\mathds{F}_q$-rational points on the reduction curve $\overline{E}$
can be expressed in terms of the spectrum of a Laplacian integral operator on $E(K)$.
\newline

The following section recalls the necessary constructions for $p$-adic analytic manifolds and how to obtain a measure as alluded to in the previous paragraph. Section 3 studies charts and transition maps from a certain kind of atlantes having connected nerve complex and the property that its transition functions on overlaps take balls to balls of equal radius. These are used in order to obtain well-defined kernel functions on a $p$-adic analytic manifold $X$, as well as corresponding Laplacian integral operators on $X$. This allows to define an analogue $\Delta^s$ of the Vladimirov-Taibleson operator on compact $p$-adic analytic manifolds depending on a real parameter $s$, and whose kernel function depends on the geodetic distance between simplices of the nerve complex associated with the atlas. The remainder of this section is dedicated to
finding their $L^2$-spectra, showing that such operators define a Feller semigroup, to solving the  Cauchy problem for the corresponding heat equation, and to proving the existence of heat kernel functions and Green functions in the case $s>1$.
The last Section 4 makes these constructions explicit in the case of an elliptic curve with good reduction, and then is able to count the points on the reduction curve defined over the residue field from the infimum of the wavelet eigenvalues of $\Delta^s$ with varying degree of precision, depending on $s\in\mathds{R}$.
\newline

Lastly, to fix notation, let $(K,\absolute{\cdot})$ be a non-archimedean local field. The maximum norm $\norm{\cdot}_K$ on $K^n$ is also known to define an ultrametric.
The given $p$-adic analytic manifold will be denoted as $X$. Since 
\[
dx=dx_1\wedge\dots\wedge dx_n
\]
already denotes a differential $n$-form on $K^n$, the notation
\[
\absolute{dx}=\absolute{dx_1}\wedge\dots\wedge\absolute{dx_n}
\]
will be used for indication the Haar measure on $K^n$. However, the individual coordinate Haar measures $\absolute{dx_i}$ will not be used in this article.

%%%%%%%%%%%%%%
\section{$p$-adic analytic manifolds}

The notion of $p$-adic analytic manifold $X$ of dimension $n$ over a non-archimedean local field $K$ can be found e.g.\ in the books \cite{Serre1964,Schneider2011}. We recall some notions here.

%%%%%%%%%%%%%%%%%%%%
\subsection{Atlantes and Differential Forms}

Let $X$ be a Hausdorff space.
An $n$-chart on $X$ is a pair
$(U_\alpha,\phi_\alpha)$, where $U_\alpha\subseteq X$ is open,
\[
\phi_\alpha\colon U_\alpha\to K^n
\]
is a  map such that $\phi_\alpha(U_\alpha)$ is open in $K^n$, and $\phi_\alpha$ is a  homeomorphism onto its image. Further, two charts  $(U_\alpha,\phi_\alpha)$, $(U_\beta,\phi_\beta)$ are \emph{compatible}, if  there is a transition map $\tau_{\alpha\beta}$ fitting into the following commutative diagram
\[
\xymatrix{
&U_\alpha\cap U_\beta\ar[dl]_{\phi_\alpha}\ar[dr]^{\phi_\beta}
\\
\phi_\alpha(U_\alpha\cap U_\beta)\ar[rr]_{\tau_{\alpha\beta}}&&\phi_\beta(U_\alpha\cap U_\beta)\,,
}
\]
and this transition map $\tau_{\alpha\beta}$ is bi-holomorphic whose inverse is the transition map $\tau_{\beta\alpha}=\tau_{\alpha\beta}^{-1}$ in the reverse horizontal direction in that diagram.
An \emph{atlas} of $X$ is a family 
\[
\mathcal{A}=\mathset{(U_\alpha,\phi_\alpha)\mid\alpha\in I}
\]
of pairwise compatible charts such that the sets $U_\alpha$ with $\alpha\in I$ form an open covering of $X$. Two atlantes $\mathcal{A}$, $\mathcal{B}$ of $X$ are equivalent if $\mathcal{A}\cup\mathcal{B}$ is also an atlas of $X$. An equivalence class of atlantes on $X$ is called a \emph{$p$-adic analytic structure} on $X$. Each analytic structure on $X$ possesses an atlas which is maximal in its equivalence class with respect to inclusion of sets, cf.\ e.g.\ \cite[Remark 7.2]{Schneider2011}. Finally, a $p$-adic analytic manifold is a pair $(X,\mathcal{A})$ consisting of a Hausdorff space $X$ and an atlas $\mathcal{A}$ of $X$ in the above sense.
\newline

In \cite[Chapter 2.2]{WeilAAG} or \cite[Chapter 7.4]{IgusaLocalZeta}, one can learn how an analytic differential $n$-form $\omega$ on a $p$-adic analytic manifold $X$ 
gives rise to a measure $\mu_X=\absolute{\omega}$ on $X\setminus V(\omega)$, where $V(\omega)\subset X$ is the vanishing set of $\omega$, in the following way: express locally in a chart $(U_\alpha,\phi_\alpha)$ the differential form $\omega$ as
\[
\omega_\alpha=f_\alpha\,dx
\]
on $U_\alpha$ with $f_\alpha\colon U_\alpha\to K^n$ an analytic map, and where 
\[
dx=dx_1\wedge\dots\wedge dx_n
\]
is the standard differential $n$-form on $K^n$. More precisely, $\omega$ is given on $\phi_\alpha(U_\alpha)$ as
\[
\omega_\alpha(\phi_\alpha(x))=f_\alpha(\phi_\alpha(x))\,dx
\]
with the chart map $\phi_\alpha\colon U_\alpha\to K^n$. Then, outside of the vanishing set of $\omega_\alpha$, the measure $\absolute{\omega}$ is defined on $(U_\alpha,\phi_\alpha)$ as
\[
\mu_X(A)=\int_{\phi_\alpha(A)}
\absolute{f_\alpha(\phi_\alpha(x))}\absolute{dx}
\]
for a set $A\subset U_\alpha$ such that 
$\phi_\alpha(A)$ is measurable w.r.t.\ the
 Haar measure on $K^n$ which has been denoted as $\absolute{dx}$ in order to not confuse it with the differential form $dx$ on $K^n$. The measure $\mu_X(A)$ can also be written as
 \[
\mu_X(A)=\int_A\absolute{\omega}=
\int_X1_A(x)\absolute{\omega(x)}
=\int_X 1_A(x)\,d\mu_X(x)\,,
 \]
 and possibly in some other more or less intuitive ways. In a similar way, the same holds for functions $f\colon X\to\mathds{C}$. This, together with a way to define kernel functions, will be made explicit in later sections.

%%%%%%%%%%%%%%%
\subsection{Equalising Atlantes}

Given a covering $\mathcal{U}$ of a topological space, one can define the \emph{nerve graph} $N_1(\mathcal{U})$ of $\mathcal{U}$ as follows: its vertices are the elements of $\mathcal{U}$, and an edge is defined between two distinct elements $U,V\in\mathcal{U}$, if and only if their intersection $U\cap V$ is non-empty. The nerve graph is in fact the $1$-skeleton of the nerve complex $N(\mathcal{U})$ used for constructing \v{C}ech complexes.
\newline

Given  a $p$-adic analytic manifold $(X,\mathcal{A})$, define the nerve complex $N(\mathcal{A})$ of the atlas $\mathcal{A}$ as the nerve complex of the underlying covering of $X$ given by the charts in $\mathcal{A}$. The nerve graph $N_1(\mathcal{A})$  is defined as the $1$-skeleton of $N(\mathcal{A})$.

\begin{Lemma}
Any compact $p$-adic analytic manifold $(X,\mathcal{A})$ has an  atlas equivalent to $\mathcal{A}$ whose nerve graph is a finite trivial graph, i.e.\ a finite graph without edges. 
\end{Lemma}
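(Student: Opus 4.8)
The plan is to exploit the fact that a $p$-adic analytic manifold is totally disconnected, so that its charts can be refined to \emph{disjoint} ones; the nerve graph then becomes edgeless precisely because distinct charts no longer overlap. The whole argument reduces to a topological refinement of $\mathcal{A}$, with no analytic work beyond checking that restrictions of charts remain charts.

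First I would record the topological input. Since $X$ is locally homeomorphic to open subsets of $K^n$, and $(K^n,\norm{\cdot}_K)$ is an ultrametric space in which every ball is clopen and every open set is a disjoint union of balls, the manifold $X$ is locally compact, Hausdorff and totally disconnected, and it possesses a basis of clopen sets: pulling a ball $B\subseteq\phi_\alpha(U_\alpha)$ back through the homeomorphism $\phi_\alpha^{-1}$ yields a clopen subset of $U_\alpha$, hence a clopen subset of $X$. Using compactness of $X$, I would then extract from this basis a finite clopen cover $C_1,\dots,C_k$ such that each $C_i$ is contained in some chart domain $U_{\alpha(i)}$.

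Next comes the disjointification. Setting
\[
D_i = C_i\setminus\bigcup_{j<i}C_j = C_i\cap\bigcap_{j<i}(X\setminus C_j),
\]
each $D_i$ is clopen (a finite Boolean combination of clopen sets), the $D_i$ are pairwise disjoint, and $\bigcup_i D_i=X$. Restricting the chart map gives candidate charts $(D_i,\phi_{\alpha(i)}|_{D_i})$; since $D_i$ is open in $U_{\alpha(i)}$ and $\phi_{\alpha(i)}$ is a homeomorphism onto an open subset of $K^n$, the image $\phi_{\alpha(i)}(D_i)$ is open in $K^n$ and $\phi_{\alpha(i)}|_{D_i}$ is a homeomorphism onto it, so each pair is a genuine $n$-chart.

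Finally I would verify that $\mathcal{B}=\mathset{(D_i,\phi_{\alpha(i)}|_{D_i})\mid 1\le i\le k}$ is an atlas equivalent to $\mathcal{A}$ with the asserted nerve graph. The sets $D_i$ cover $X$ and are pairwise disjoint, so every overlap $D_i\cap D_j$ with $i\ne j$ is empty, the corresponding transition maps are vacuously bi-holomorphic, and $N_1(\mathcal{B})$ is a finite graph without edges. For equivalence one checks that $\mathcal{A}\cup\mathcal{B}$ is an atlas: the only new compatibility conditions involve a restricted chart $(D_i,\phi_{\alpha(i)}|_{D_i})$ and a chart $(U_\beta,\phi_\beta)$ of $\mathcal{A}$, where on $D_i\cap U_\beta\subseteq U_{\alpha(i)}\cap U_\beta$ the transition map is simply the restriction of $\tau_{\alpha(i)\beta}$ and hence still bi-holomorphic. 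The one genuinely essential ingredient — and the only place the non-archimedean hypothesis enters — is the existence of a clopen basis from the first step; everything after that is the purely set-theoretic disjointification together with routine verifications, so I expect no serious obstacle beyond correctly invoking total disconnectedness.
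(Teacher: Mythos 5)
Your proof is correct in substance, but it takes a genuinely different and more elementary route than the paper. The paper's entire proof is a one-line citation of Serre's classification theorem (Th\'eor\`eme (1) of the cited Serre 1965 paper), which states that every compact $p$-adic analytic manifold is analytically isomorphic to a finite disjoint union of copies of $O_K^n$; the edgeless finite nerve graph is then immediate. You instead use only topological input --- a clopen basis, compactness, and the Boolean disjointification $D_i=C_i\setminus\bigcup_{j<i}C_j$ --- and never invoke the classification: you get pairwise disjoint chart domains without identifying what they are analytically. This buys self-containedness and generality (the argument works for any compact Hausdorff space covered by charts into an ultrametric local model), but it does not deliver the normal form $O_K^n\sqcup\dots\sqcup O_K^n$ that Serre's theorem provides, which the paper implicitly leans on later (e.g.\ the Serre invariant in Theorem \ref{SerreGR}). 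One justification in your first step needs patching: the inference ``clopen in $U_\alpha$, hence clopen in $X$'' is false in general topology, since closedness in an open subspace does not pass to the ambient space (e.g.\ $(0,1]$ is closed in $(0,2)$ but not in $\mathds{R}$). The fix is easy in this setting: take $B$ to be a closed ball of finite radius, which is compact because $K$ is a local field; then $\phi_\alpha^{-1}(B)$ is compact, hence closed in the Hausdorff space $X$, and it is open in $X$ because $B$ is open in $K^n$ and $U_\alpha$ is open in $X$. With that repair (and discarding any empty $D_i$, as the paper's nerve-complex convention requires), your argument is complete.
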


\begin{proof}
This is the content of Serre's theorem \cite[Th\'eor\`eme (1)]{Serre1965}. 
\end{proof}

This result shows that, in principle the number of connected components of a nerve graph represented by an atlas of a given analytic structure on a compact manifold can be arbitrary.

\begin{definition}
A $p$-adic analytic structure on a Hausdorff space $X$ is called \emph{connected}, if it contains an atlas whose nerve graph is connected.
\end{definition}

\begin{definition}
Let $F\colon U \to V$ be a locally bi-analytic map between open subsets $U,V$ of $K^n$. The smallest $N \in\mathds{Z}$ such that $F$ takes balls of radius $p^{-N}$ to balls of
the same radius, is called the \emph{equalising number} of $F$. If no such integer exists, then the
equalising number will be defined as $-\infty$. The notation is
\[
e(F) = \text{the equalising number of $F$}
\]
for a given such map $F$. In the case that $e(F)$ is finite, the map $F$ is called equalising.
\end{definition}

\begin{example} An example of a bi-analytic map which is not equalising is given by
\[
f\colon K \to K,\; x \mapsto \pi x\,,
\]
as even no disc in $K$ is mapped to a disc having the same radius.
\end{example}

\begin{definition}
The \emph{equalising number} of an atlas $\mathcal{A}$ of a $p$-adic manifold is 
\[
e(\mathcal{A})=\sup\mathset{e(\tau_{\alpha,\beta})\mid\tau_{\alpha\beta}\,\text{is the transition map for $(U_\alpha,\phi_\alpha),(U_\beta,\phi_\beta)\in\mathcal{A}$}}\,,
\]
and is an integer, $\infty$, or $-\infty$. In the case that $e(\mathcal{A})\neq-\infty$,
 the atlas is called \emph{equalising}, and $e(\mathcal{A})$ the \emph{equalising number} of $\mathcal{A}$.
\end{definition}

\begin{thm}\label{finiteEqAtlas}
An analytic structure of a compact $p$-adic manifold contains a finite equalising atlas with a finite equalising number. 
\end{thm}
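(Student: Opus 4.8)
The plan is to represent the analytic structure by an arbitrary atlas, cut it down to a finite subatlas by compactness, and then renormalise the charts so that on small balls every transition map becomes an isometry, with a threshold that is uniform because only finitely many maps occur. First I would use that $X$ is compact: the chart domains cover $X$, so finitely many charts $(U_1,\phi_1),\dots,(U_m,\phi_m)$ suffice, and I keep all the overlaps $U_i\cap U_j$ so that the nerve still has edges. Post-composing each $\phi_i$ with an affine map $x\mapsto A_ix+b_i$, $A_i\in\mathrm{GL}_n(K)$, changes each transition map $\tau_{ij}=\phi_j\circ\phi_i^{-1}$ into $A_j\tau_{ij}A_i^{-1}+(\text{const})$ and so replaces its Jacobian $D\tau_{ij}$ by $A_j\,D\tau_{ij}\,A_i^{-1}$; the freedom in the $A_i$ is what I will exploit.

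The analytic input is the local normal form of a bi-analytic map. Refining the atlas so that each non-empty overlap is small enough that $D\tau_{ij}$ is constant modulo $\pi$ there, I use that on a ball $B$ of radius $p^{-N}$ with $N$ large the higher-order terms of $\tau_{ij}$ are ultrametrically negligible, whence $\tau_{ij}(B)=\tau_{ij}(a)+D\tau_{ij}(a)(B-a)$; this image is a ball of the same radius exactly when $D\tau_{ij}(a)\in\mathrm{GL}_n(\mathcal{O})$, where $\mathcal{O}$ is the ring of integers of $K$. The goal is thus to choose the $A_i$ so that $A_j\,D\tau_{ij}\,A_i^{-1}\in\mathrm{GL}_n(\mathcal{O})$ on every overlap. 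Since each $D\tau_{ij}$ is now a single matrix modulo $\pi$, this is a finite system indexed by the edges of the nerve, which I would solve edge-by-edge along a spanning tree; on those edges the transition maps then take all sufficiently small balls to balls of equal radius, i.e.\ are equalising.

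For the equalising number I note that for each edge the threshold $N_{ij}$, beyond which the linear part governs $\tau_{ij}$, is finite because the power-series remainder is uniformly bounded on the (compact) overlap; with finitely many edges $e(\mathcal{A})=\max_{ij}N_{ij}$ is an integer, so $e(\mathcal{A})<\infty$. To secure $e(\mathcal{A})\neq-\infty$, so that the atlas is genuinely equalising and not merely a patchwork of isometries, I would arrange one edge whose transition map is a non-affine automorphism with integral Jacobian, which respects small balls but distorts one of intermediate radius and so fixes the threshold at a finite integer. The step I expect to be the main obstacle is precisely the simultaneous integralisation $A_j\,D\tau_{ij}\,A_i^{-1}\in\mathrm{GL}_n(\mathcal{O})$: the edges outside the spanning tree impose a cocycle condition on the products of the $D\tau_{ij}$ around cycles of the nerve, and these need not be integral for the tree-based choice of the $A_i$. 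Overcoming this should require shrinking the overlap balls further until every cycle product of Jacobians lands in $\mathrm{GL}_n(\mathcal{O})$, after which the uniform-threshold count completes the proof.
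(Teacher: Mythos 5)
Your reduction of the problem to the lattice condition $A_j\,D\tau_{ij}\,A_i^{-1}\in\mathrm{GL}_n(O_K)$ is sound, and you are right that the cycle condition is the crux; the gap is that your proposed repair of that step does not work. Shrinking the overlaps cannot change anything: the Jacobian $D\tau_{ij}(a)$ is a derivative at a point, so it is the same matrix no matter how small a neighbourhood of $a$ you retain, and hence the holonomy of a cycle of the nerve graph --- the product of the (conjugated) Jacobians around it --- is unchanged by refinement. Writing $L_i=A_i^{-1}O_K^n$, your edge condition says $D\tau_{ij}\,L_i=L_j$, so around a cycle the product of the Jacobians (evaluated at generally unrelated base points) must stabilise the lattice $L_{i_0}$; if that product has determinant of absolute value $\neq 1$, it stabilises no lattice whatsoever, and no choice of the $A_i$ succeeds for balls of any size. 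This really occurs: with odd residue characteristic, take $X=\pi O_K\cup(1+\pi O_K)\cup(2+\pi O_K)$, charts $U_1=\pi O_K\cup(1+\pi O_K)$, $U_2=(1+\pi O_K)\cup(2+\pi O_K)$, $U_3=(2+\pi O_K)\cup\pi O_K$, with $\phi_1=\phi_2=\id$ and $\phi_3$ equal to the identity on $2+\pi O_K$ but to $x\mapsto\pi x$ on $\pi O_K$ (analytic, as the two pieces are disjoint opens). The three overlaps are nonempty, the triple intersection is empty, the Jacobians around the $3$-cycle are $1,1,\pi$, and comparing valuations shows that no affine renormalisation of the three charts can equalise this atlas. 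So the obstruction is not an artifact of your spanning-tree bookkeeping: it kills the entire class of modifications you allow, because one global affine map per chart cannot act differently on the different overlap pieces of that chart. (Refining can dissolve the obstruction in this example --- splitting $U_3$ into its two clopen pieces turns the cycle into a tree --- but that is because splitting charts secretly enlarges your group of recoordinations to locally affine maps; it is not because the cycle products become integral, which is what your argument asserts.)

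This is exactly the point where the paper's proof is non-affine. There, a chart $\psi$ is replaced by $\tilde\psi\circ\psi$ with $\tilde\psi(z)=p^mz+F^{-1}(z)$, built from the transition map $F(x)=a_0+Ax+\text{h.o.t.}$ itself; the new transition map is $H(x)=p^mF(x)+x$, whose linear part $p^mA+I$ lies at distance $<1$ from the identity for $m\gg0$. A separate Claim then shows that equalising one transition map of an overlapping triple in this way does not destroy the equalising property of the others (the relevant linear part becomes $p^mA_{UV}+A_{UW}$, still at distance $<1$ from $I$ when $\norm{A_{UW}-I}<1$ and $m\gg0$), and induction handles higher-order overlaps. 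Because these modifications are tailored to each overlap and, on a totally disconnected space, can be glued along disjoint clopen pieces, they see no holonomy obstruction. To salvage your argument you would have to replace your single affine map per chart by locally affine (or locally analytic) recoordinations, so that the cycle condition decouples into independent conditions on the separate overlap pieces --- at which point you are essentially rebuilding the paper's equalising method rather than providing an alternative to it.
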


\begin{proof}
By compactness, the analytic structure contains a finite atlas $\mathcal{A}$. Hence, if all transition maps are equalising, then the quantity $e(\mathcal{A})$ is a finite integer.

\smallskip
Let now $(U,\phi),(V,\psi)$ be a pair of compatible charts in $\mathcal{A}$ with  transition map 
\[
F\colon \phi(U\cap V)\to\psi(U\cap V)\,,
\]
where $F(x)$ is w.l.o.g.\ the bi-analytic function
\[
F(x)=a_0+Ax+\text{h.o.t.}
\]
on $K^n$. Assuming that
\[
\norm{A-I}\ge 1\,,
\]
the following equalising method will be applied: First,
there exists $m\in\mathds{Z}$ such that
\[
H(x)=p^mF(x)+x=p^ma_0+\left(p^mA+I\right)x+\text{h.o.t.}
\]
satisfies
\[
\norm{A'-I}<1
\]
for 
\[
A'=p^mA+I\,.
\]
Then the diagram
\[
\xymatrix@C=4pt{
&&U\cap V\ar[dl]_\phi\ar[dr]^\psi
\ar@/_4pc/[ddl]_{\phi'}\ar@/^4pc/[ddr]^{\psi'}
\\
&K^n\ar[rr]^F\ar[d]_{\tilde\phi}&&K^n\ar[d]^{\tilde\psi}
\\
&K^n\ar[rr]_H&&K^n
}
\]
commutes, where the other maps are given as
\begin{align*}
\tilde\phi&=\id\,,
\\
\phi'&=\phi\,,
\\
\tilde\psi(z)&=p^mz+F^{-1}(z)\,,
\\
\psi'&=\tilde\psi\circ\psi\,.
\end{align*}
It follows that the charts
\[
(U,\phi),(U,\phi'),(V,\psi),(V,\psi')
\]
are compatible, and the pair $(U,\phi'),(V,\psi')$ is an equalising pair of charts which can replace $(U,\phi),(V,\psi)$, and the new transition map is given by
\[
\phi'(U\cap V)\to\psi'(U\cap V),\;
x\mapsto \tilde\psi(F(x))=H(x)\,,
\]
because it does indeed hold true that
\[
\tilde\psi(F(x))=p^mF(x)+F^{-1}(F(x))=H(x)
\]
for $x\in\phi'(U\cap V)=\phi(U\cap V)$.

\smallskip
Now, given again atlas $\mathcal{A}$, assume w.l.o.g.\
that 
\begin{align}\label{chartTriple}
(U,\phi_U),(V,\phi_V),(W,\phi_W)
\end{align}
are pairwise overlapping charts in $\mathcal{A}$ for which every pair is not equalising.
The only interesting case  is that the triple intersection $U\cap V\cap W$ is non-empty. Then the
three transition maps form a diagram
\[
\xymatrix{
\phi_U(U\cap V\cap W)\ar[rr]^{\tau_{\phi_U\phi_V}}\ar[dr]_{\tau_{\phi_U\phi_W}}&&
\phi_V(U\cap V\cap W)\ar[dl]^{\quad\tau_{\phi_V\phi_W}}
\\
&\phi_W(U\cap V\cap W)
}
\]
forming a part of the larger diagram
\begin{align}\label{largerDiagram}
\xymatrix@=40pt{
K^n\ar[rr]^{F_{UV}}\ar[ddr]_{F_{UW}}&&K^n
\\
&U\cap V\cap W\ar[ul]_{\phi_U}\ar[ur]^{\phi_V}\ar[d]^{\phi_W}
\\
&K^n\ar[uur]_{F_{WV}}
}
\end{align}
The question is, if it is possible to obtain an equalisation of the triple (\ref{chartTriple}) fitting into the diagram (\ref{largerDiagram}).
The equalising method above allows to keep $(U,\phi_U)$ and simultaneously modify $(V,\phi_V)$ and $(W,\phi_W)$ such that
we may assume that $F_{UV}$ and $F_{UW}$ are both equalising bi-analytic maps. This means that one can now equalise $F_{WV}$ as follows:
\[
\xymatrix{
K^n\ar[r]^{F_{WV}}\ar[d]^{\id}&K^n\ar[d]_{\tilde\psi}
\\
K^n\ar[r]_H&K^n
}
\]
such that $H$ is an equalising bi-analytic map. 

\paragraph{Claim.} The map $\tilde\psi\circ F_{UV}$ is also equalising.

\begin{proof}[Proof of Claim]
It holds true that
\[
\tilde\psi(F_{UV}(x))
=p^mF_{UV}(x)+F_{UV}^{-1}(F_{WV}(x))
=p^mF_{UV}(x)+F_{UW}(x)\,.
\]
Hence, the linear term of this map is
\[
(p^mA_{UV}+A_{UW})x\,,
\]
where $A_{UV}$ and $A_{UW}$ are the matrices appearing in the first-order approximations of $F_{UV}$ and $F_{UW}$, respectively.
Thus, the matrix
\[
B=p^mA_{UV}+A_{UW}
\]
satisfies the inequality
\[
\norm{B-I}=\norm{(A_{UW}-I)+p^mA_{UV}}
<1
\]
for $m>>0$, because $\norm{A_{UW}-I}<1$ by assumption.
\end{proof}

By induction, it now readily follows that in the case of a non-empty intersection
\[
U_0\cap \dots\cap U_r\neq\emptyset
\]
with $r\in\mathds{N}$, the above proof can be extended to show that all pairs of transition maps for the coverings $(U_1,\phi_1),\dots,(U_r,\phi_r)$ can be made equalising.
\end{proof}

%%%%%%%%%%%%%%5
\subsection{A weighted nerve complex}\label{sec:nerveComplex}

The nerve complex $N(\mathcal{U})$ of a covering $\mathcal{U}=\mathset{U_i\mid i\in I}$ of a topological space, with $I$ assumed at most countable, is defined as follows:
\begin{align*}
U_i&\neq\emptyset,\quad i\in I&\text{(vertices)}
\\
U_{ij}=U_i\cap U_j&\neq\emptyset,\quad i\neq j\in I&\text{(edges)} 
\\
&\vdots&\vdots\quad
\\
U_{i_0\dots i_k}=U_{i_0}\cap\dots\cap U_{i_k}&\neq\emptyset,\quad i_0,\dots,i_k\in I\;\text{pairwise distinct}&\text{($k$-simplices)}
\\
&
\vdots &\text{(etc.)}
\end{align*}
Of course,  $U_i\in\mathcal{U}$ is assumed non-empty, as it is
otherwise tacitly removed from $\mathcal{U}$.
A $k$-simplex $U_{J_k}$ is a $k$-facet of an $\ell$-simplex $U_{J_\ell}$ with $k<\ell$ and $J_k,J_\ell\subseteq I$ such that $\absolute{J_k}=k+1$, $\absolute{J_\ell}=\ell+1$, iff $J_k\subset J_\ell$. Here, we used the notation
\[
U_J=\bigcap\limits_{j\in J}U_j
\]
for a finite subset $J$ of $I$. A $k$-facet of  a $k+1$-simplex $\sigma$ is called a face of $\sigma$. 
\newline

A tacit assumption about the atlas $\mathcal{A}$, apart from the other stated assumptions, is that if $(U,\phi),(V,\psi)\in\mathcal{A}$ are such that $\phi\neq\psi$, then also $U\neq V$. This is a motivation for us to use equalising atlantes on $p$-adic analytic manifolds.
\newline

Let  $N(\mathcal{A})$ be the nerve complex of a compact $p$-adic analytic manifold $(X,\mathcal{A})$, where the atlas $\mathcal{A}$ is assumed finite and equalising. Further, $N(\mathcal{A})$ is assumed connected, although for the considerations in this section, this assumption is not needed.
Its $k$-simplices are now going to be given weights $w(\sigma)$ as follows:
\newline

For a $k$-simplex $\sigma=U_{i_0\dots i_k}$, define
\[
w(\sigma)=\mu_X(U_{i_0\dots i_k})\,,
\]
and thus obtain a weight function on the set of facets of $N(\mathcal{A})$.

\begin{definition}
 The pair $(N(\mathcal{A}),w)$ is called the \emph{weighted nerve complex} of the $p$-adic manifold $(X,\mathcal{A})$.
\end{definition}

As an example, let $\mathds{P}^n(K)$ be the $p$-adic projective $n$-space. It is shown in \cite[Chapter 3.1]{IgusaLocalZeta} that this is a $p$-adic analytic manifold, endowed with 
the  atlas consisting of the $n+1$ open sets
\[
U_i=\mathset{[x_0:\dots:x_n]\in\mathds{P}^n(K)\mid\absolute{x_i}=1,\;\forall j\neq i\colon\absolute{x_j}\le 1}
\]
for $i=0,\dots,n$. The chart maps can be written as
\[
\phi_i\colon U_i\to K^n,\;[x_0:\dots:x_n]\mapsto
\left[\frac{x_0}{x_i}:\dots:1:\dots:\frac{x_n}{x_i}\right]=
\left(\frac{x_0}{x_i},\dots,\frac{x_n}{x_i}\right)\,,
\]
where in the first expression after the arrow $\mapsto$, the $1$ is in the $i$-th place.
\newline

Observe that the image of each $\phi_i$ is $O_K^n$. In order to keep track of indices, write the elements of $\phi_i(U_i)$ as
\[
z=\left(z_0,\dots,\check{z}_i,\dots,z_n\right)\in \phi_i(U_i)=O_K^n\,,
\]
i.e.\ as $n+1$-tuples with $i$-th entry left out. This is justified, as $\mathds{P}^n(K)$ is a quotient of $K^{n+1}\setminus\mathset{0}$.
The transition maps are now given as
\[
\tau_{ij}\colon\phi_i(U_i\cap U_j)\to\phi_j(U_i\cap U_j),\;\left(z_0,\dots,\check{z}_i,\dots,z_j,\dots,z_n\right)\mapsto
\left(\frac{z_0}{z_j},\dots,\frac{1}{z_j},\dots,\check{z}_j,\dots,\frac{z_n}{z_j}\right)
\]
for $i< j$, and $\tau_{ii}=\id$, for $i,j=0,\dots,n$. If $i>j$, then
\[
\tau_{ij}=\tau_{ji}^{-1}\,,
\]
where $\tau_{ji}$ is defined as above.
\newline

Observe that for $i<j$: 
\[
\phi_i(U_i\cap U_j)=O_K^{j-1}\times S_0\times O_K^{n-j}\,,
\]
i.e.\ the $j$-th coordinate ranges over the unit sphere
\[
S_0=\mathset{x\in K\mid\absolute{x}=1}\,,
\]
whereas the other coordinates range over the whole unit disc $O_K$.
\newline

The measure $\absolute{\omega}$ on $\mathds{P}^n(K)$ is
on each chart $U_i$ ($i=0,\dots,n$) given
by the Haar measure, and transitions
as follows: the differentials of the functions $\frac{x_k}{x_i}$ on $U_{ij}$ are
\[
d\!\left(\frac{x_k}{x_i}\right)=\frac{d(x_k/x_j)}{x_i/x_j}-\frac{x_k/x_j}{\left(x_i/x_j\right)^2}\,d\!\left(\frac{x_i}{x_j}\right)
\]
for $k\neq i,j$, and
\[
d\!\left(\frac{x_j}{x_i}\right)
=d\!\left(\frac{1}{x_i/x_j}\right)=-\frac{d(x_i/x_j)}{\left(x_i/x_j\right)^2}\,,
\]
and then taking the wedge product yields
\begin{align*}
d\!\left(\frac{x_0}{x_i}\right)&\wedge\dots\wedge\left(x_i/x_i\right)^{\vee}\wedge\dots \wedge d\!\left(\frac{x_n}{x_i}\right)
\\
&=\pm\left(\frac{x_j}{x_i}\right)^{n+1}
d\!\left(\frac{x_0}{x_j}\right)
\wedge\dots\wedge\left(x_j/x_j\right)^{\vee}\wedge\dots\wedge
d\!\left(\frac{x_n}{x_j}\right)\,,
\end{align*}
where the sign depends on, whether  or not a transposition is needed in order to bring the differentials into the natural ordering.
\newline

Hence, on each chart, it is
\[
\absolute{\omega|_{U_i}}=\absolute{dx_0}\wedge\dots\wedge\absolute{dx_i}^\vee\wedge\dots\wedge\absolute{dx_n}\,,
\]
and on the overlap $U_{ij}$, it transitions as
\[
\absolute{\omega|_{\phi_i(U_{ij}})}=\frac{\absolute{x_i}^{n+1}}{\absolute{x_j}^{n+1}}\absolute{dx_0}\wedge\dots\wedge\absolute{dx_j}^\vee\wedge\dots\wedge\absolute{dx_n}
=\absolute{\omega|_{\phi_j(U_{ij})}}\\,.
\]
Hence the local Haar measures transition by a constant factor $1$ on overlaps.
\newline

In any case, the transition maps are $p$-adic analytic and take balls to balls of equal measure. Hence, 
the atlas
\[
\mathcal{A}=\mathset{(U_i,\phi_i)\mid i=0,\dots,n}
\]
is equalising with
\[
e(\mathcal{A})=1
\]
as its equalising number. Figure \ref{NerveProjectivePlane} shows the weighted nerve complex associated with the $p$-adic projective plane $(\mathds{P}^2(K),\mathcal{A})$. It is a $2$-simplex with constant weight for every simplex dimension. It also shows that the $0$-faces are actually balls with radius larger than $q^{-e(\mathcal{A})}=q^{-1}$.

\begin{figure}[ht]
\[
\xymatrix{
&\circled{\scriptsize 1}\ar@{-}[dl]_{1-q^{-1}}\ar@{-}[dr]^{1-q^{-1}}&
\\
\circled{\scriptsize 1}\ar@{-}[rr]_{1-q^{-1}}\ar@{}[urr]_{\!\!\!\!\!\!\!\!\!(1-q^{-1})^2}&&\circled{\scriptsize 1}
}
\]
\caption{The weighted nerve complex associated with $\left(\mathds{P}^2(K),\mathcal{A}\right)$.}\label{NerveProjectivePlane}
\end{figure}

%%%%%%%%%%%%%%%%%%%%%%%
\section{Laplacian operators on compact $p$-adic manifolds}

In this section, $X$ denotes a compact $p$-adic analytic manifold with a given connected analytic structure 
containing an equalising atlas $\mathcal{A}$ whose nerve graph is also connected. Furthermore, each open in the covering of $X$ given by $\mathcal{A}$ is assumed compact.
If one starts in the proof of Theorem \ref{finiteEqAtlas} 
with a finite atlas having a connected nerve graph, then one obtains an equalising atlas of $X$ having the same property.
This property is also assumed.
\newline

Again, assume that $(X,\mathcal{A})$ has a non-vanishing analytic differential $n$-form $\omega$. This is justified by \cite[Th\'eor\`eme (2)]{Serre1965}.

%%%%%%%%%%%%%%%%%%%%%%
\subsection{Function spaces on $p$-adic manifolds}

In this subsection, $X$ need not be compact, but nevertheless has an  analytic
structure containing an equalising atlas $\mathcal{A}$ whose nerve graph is also connected. Furthermore, it is assumed that a non-vanishing analytic differential $n$-form $\omega$ exists on $(X, \mathcal{A})$.
\newline

Then a function
$h\colon X\to\mathds{R}$ is defined by the following data: a family  of functions
\[
h_\alpha\colon \phi_\alpha(U_\alpha)\to\mathds{R}
\]
for each chart $(U_\alpha,\phi_\alpha)$  ($\alpha\in I$),
satisfying the following condition: given charts 
\[
(U_\alpha,\phi_\alpha),(U_\beta,\phi_\beta)\in\mathcal{A}\]
such that
\[
U_{\alpha\beta}=U_\alpha\cap U_\beta\neq\emptyset\,,
\]
then the following diagram is commutative:
\begin{align}\label{diagram}
\xymatrix{
&U_{\alpha\beta}\ar[dl]_{\phi_\alpha}\ar[dr]^{\phi_\beta}&
\\
\phi_\alpha(U_{\alpha\beta})\ar[rr]^{\tau_{\alpha\beta}}\ar[dr]_{h_\alpha}&&\phi_\beta(U_{\alpha\beta})\ar[dl]^{h_\beta}
\\
&\mathds{R}
}
\end{align}
where $\tau_{\alpha\beta}$ is the  transition map associated with the charts $(U_\alpha,\phi_\alpha),(U_\beta,\phi_\beta)$ from the atlas.
\newline

Since indicator functions $1_A$ of measurable subsets $A\subseteq X$ can be defined in the same manner, the following definition is motivated by the equalising character of the atlas $\mathcal{A}$:

\begin{definition}
A set $B\subseteq X$ is a \emph{ball}, if for each chart $(U_\alpha,\phi_\alpha)\in\mathcal{A}$, $\phi(B)$ is a ball in $K^n$, and for any pair $(U_\alpha,\phi_\alpha),(U_\beta,\phi_\beta)\in\mathcal{A}$ with
$B\subseteq U_\alpha\cap U_\beta$, the transition map $\tau_{\alpha\beta}$ takes $\phi_\alpha(B)$ to $\phi_\beta(B)$ which is a ball in $K^n$
of equal radius. This radius is defined as the radius of a given ball $B\subseteq X$.
\end{definition}

Notice that in a $p$-adic manifold having an equalising atlas, the notion of radius of a ball is well-defined.

\begin{definition}
The \emph{join} of two points $x,y\in X$ of $X$ is 
\[
x\wedge y=\bigcap\limits_{B}B\,,
\]
where $B\subseteq X$ runs through all balls of $X$ containing $x,y\in X$, if there exist such balls. Otherwise, the join is not defined. 
\end{definition}

Notice that by the equalising property of the atlas $\mathcal{A}$, the notion of join $x\wedge y$ is well-defined for any pair of points $x,y\in X$.
\newline

A function $f\colon X\to\mathds{C}$ is \emph{locally constant}, if it is constant on balls $B\subseteq X$ with sufficiently small radius.
This is equivalent to saying that
for each chart $(U_\alpha,\phi_\alpha)$, the associated function
\[
f_\alpha\colon\phi_\alpha(U)\to\mathds{C}
\]
is constant on sufficiently small balls which get mapped to balls of equal radius under all  the suitable transition functions given by the atlas $\mathcal{A}$.
\newline

In this manner, define
\begin{align*}
\mathcal{D}(X)&=\mathset{f\colon X\to\mathds{C}\mid\text{$f$ is locally constant with compact support}}
\\
C(X)&=\mathset{f\colon X\to \mathds{C}\mid\text{$f$ is continuous}}
\\
L^\rho(X,\mu_X)&=\mathset{f\colon X\to\mathds{C}\mid\forall (U,\phi)\in\mathcal{A}\colon f\in L^\rho(\phi(U),\phi_*\mu_X)}\;(0<\rho\le\infty)\,,
\end{align*}
as function spaces on $X$.
\newline

In particular, the last type of function spaces requires integration on $X$ with respect to the measure $\mu_X$ given by a non-vanishing regular analytic $n$-form $\omega$ on $X$. It can be written out on charts $(U_\alpha,\phi_\alpha),(U_\beta,\phi_\beta)$ as follows: Assume that the integral
\[
\int_{U_{\alpha\beta}}h(x)\,d\mu_X(x)
\]
is to be evaluated on the set
\[
U_{\alpha\beta}=U_\alpha\cap U_\beta\,.
\]
Thus, using diagram (\ref{diagram}), obtain
\begin{align*}
\int_{U_{\alpha\beta}}h(x)\,d\mu_X(x)&=
\int_{\phi_\alpha(U_{\alpha\beta})}h_\alpha(z)\absolute{\omega_\alpha(z)}
=\int_{\tau_{\alpha\beta}^{-1}(\phi_\beta(U_{\alpha\beta}))}
h_\beta(\tau_{\alpha\beta}(y))\absolute{\omega_\beta(\tau_{\alpha\beta}(y))}
\\
&=\int_{\tau_{\alpha\beta}^{-1}(\phi_\beta(U_{\alpha\beta}))}
h_\beta(\tau_{\alpha\beta}(y))\absolute{f_\beta(\tau_{\alpha\beta}(y))}\absolute{d\tau_{\alpha\beta}(y)}
\\
&=\int_{\phi_\beta(U_{\alpha\beta})}
h_\beta(y)\absolute{f_\beta(y)}\absolute{\det\left(\tau_{\alpha\beta}'(y)\right)}\absolute{dy}
\\
&=\int_{\phi_\beta(U_{\alpha\beta})}
h_\beta(y)\absolute{\omega_\beta(y)}\,,
\end{align*}
where we have written out the differential form $\omega$ on $U_{\alpha\beta}$ often encountered in the following sloppy notation: 
\[
\omega_\alpha=\omega|_{U_\alpha}=f_\alpha\,dx,\quad\omega_\beta=\omega|_{U_\beta}=f_\beta\,dx
\]
with analytic functions $f_\alpha\colon U_\alpha\to K$, $f_\beta\colon U_\beta\to K$ as:
\[
\omega_\alpha|_{U_{\alpha\beta}}=f_\alpha\,dx
|_{U_{\alpha\beta}}=f_\beta\,dx|_{U_{\alpha\beta}}=\omega_\beta|_{U_{\alpha\beta}}\,,
\]
but here
explicitly given as
\[
f_\alpha|_{\phi_\alpha(U_{\alpha\beta})}=f_\beta|_{\phi_\beta(U_{\alpha\beta})}\circ\tau_{\alpha\beta}
=\det\left(\tau_{\alpha\beta}'\right)\cdot f_\beta|_{\phi_\beta(U_{\alpha\beta})}
\]
in order to be able to explicitly integrate on the $p$-adic analytic manifold $X$ using charts and transition functions.
\newline

Using this and diagram (\ref{diagram}), it is an exercise to show that the function space $\mathcal{D}(X)$ is dense in the other function spaces defined above, and that the latter are Banach spaces.
%%%%%%%%%%%%%%%%%%5

%%%%%%%%%%%%%555

%%%%%%%%%%%%%%%%%%
\subsection{Local Vladimirov-Taibleson operators}\label{VTop}

A well-defined kernel function 
$k(x,y)$ on the $p$-adic manifold $X\times X$ can be defined using the atlas $\mathcal{A}\times\mathcal{A}$.
Given a kernel function $k(x,y)$ on $X$, one can now define a Laplacian integral operator $\mathcal{K}$ as follows:

\begin{align*}
\mathcal{K}h(x)&=\int_Xk(x,y)(h(x)-h(y))\,\mu_X(y)\,.
\end{align*}

The following definition will make use of the weighted nerve complex $(N(\mathcal{A}),w)$ defined in Section \ref{sec:nerveComplex}.
First, define  for $A\subset X$
\[
\sigma(A)=\text{the highest-dimensional simplex containing 
$A$}\,,
\]
if $A$ is contained in a simplex of $N(\mathcal{A})$.
Use the notation
\[
\sigma(x_1,\dots,x_m)=\sigma(\mathset{x_1,\dots,x_m})
\]
for $x_1,\dots,x_m\in X$, if there exists a simplex in which these points are all contained.
\newline

Define the function
\[
d_g\colon X\times X\to\mathds{R}_{\ge0},\;(x,y)\mapsto
\begin{cases}
\mu_X(x\wedge y),&\text{$x,y$ contained in a ball in $X$}
\\
\min\limits_{\gamma\colon\sigma(x)\leadsto\sigma(y)}\mathset{\mu_X(U_\gamma(x,y))},&
\text{$x\wedge y$ does not exist}
\end{cases}
\]
where $\gamma\colon\sigma(x)\leadsto\sigma(y)$ is a path between $\sigma(x)$ and $\sigma(y)$ in the simplicial complex $N(\mathcal{A})$ passing through a sequence of adjacent faces (of higher or lower dimension), and
\[
U_\gamma(x,y)=\bigcup\limits_{\sigma\in\gamma}\sigma\subseteq X\,,
\]
where the union is taken over the subsets of $X$ corresponding to the simplices along the path $\gamma$.

\begin{Lemma}
The function $d_g$ defines a distance on $X$.
\end{Lemma}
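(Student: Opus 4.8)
The plan is to verify the four metric axioms for $d_g$: non-negativity, the identity of indiscernibles, symmetry, and the triangle inequality. The first three I expect to be routine. Non-negativity is immediate, since in both branches of the definition $d_g(x,y)$ is the $\mu_X$-measure of a subset of $X$. For symmetry, the join satisfies $x\wedge y=y\wedge x$ by construction, and in the second branch every path $\gamma$ from $\sigma(x)$ to $\sigma(y)$ reverses to a path from $\sigma(y)$ to $\sigma(x)$ with the same underlying union $U_\gamma$, so the two minima coincide; one also checks that the dichotomy ``$x\wedge y$ exists'' is itself symmetric. For the identity of indiscernibles I would argue $d_g(x,x)=\mu_X(x\wedge x)=\mu_X(\{x\})=0$, using that a point of an $n$-dimensional $p$-adic manifold is an intersection of balls of arbitrarily small radius and hence has measure zero; conversely, if $x\neq y$ then either $x\wedge y$ is a ball of positive radius, of positive measure since $\mu_X$ is locally $\absolute{f_\alpha}\,\absolute{dx}$ with $f_\alpha$ nowhere vanishing, or the union $U_\gamma(x,y)$ contains the non-empty open simplex $\sigma(x)$, again of positive measure.

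For the triangle inequality $d_g(x,z)\le d_g(x,y)+d_g(y,z)$, the first useful observation is a reduction of the case analysis. In the ultrametric setting two balls that share a point are nested; since any two of the three pairs $\{x,y\}$, $\{y,z\}$, $\{x,z\}$ share a point, the existence of any two of the joins forces the existence of the third. Hence exactly $0$, $1$, or $3$ of the pairwise joins exist, and I would treat these regimes separately. When all three joins exist, $x,y,z$ lie in a common ball and the three joins are nested with the largest containing the other two; monotonicity of $\mu_X$ then yields even the strong inequality $d_g(x,z)\le\max\{d_g(x,y),d_g(y,z)\}$. When no join exists, all three distances are path-measures: concatenating an optimal path $\gamma_1\colon\sigma(x)\leadsto\sigma(y)$ with an optimal path $\gamma_2\colon\sigma(y)\leadsto\sigma(z)$ produces a path $\gamma\colon\sigma(x)\leadsto\sigma(z)$ with $U_\gamma\subseteq U_{\gamma_1}\cup U_{\gamma_2}$, so subadditivity $\mu_X(A\cup B)\le\mu_X(A)+\mu_X(B)$ together with minimality of $d_g(x,z)$ over paths gives the claim.

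The main obstacle is the mixed regime in which exactly one join exists, so that one side of the inequality is a ball-measure while the others are nerve-path-measures; here the two branches of the definition must be reconciled. The approach I would take rests on the fact that two points sharing a ball necessarily share a chart $U_\alpha$, so that the highest-dimensional simplices $\sigma(y)$ and $\sigma(z)$ both carry $\alpha$ as a vertex and are therefore joined by a short path through $\alpha$ in $N(\mathcal{A})$; one then extends an optimal path for the pair that uses the nerve through this connecting path and estimates the resulting union by subadditivity. The delicate point, and where the equalising hypothesis on $\mathcal{A}$ must enter, is to control the incremental measure contributed by passing between $\sigma(y)$ and $\sigma(z)$ by the measure $\mu_X(y\wedge z)$ of the join, that is, to establish a compatibility estimate comparing the nerve-path distance between $\sigma(y)$ and $\sigma(z)$ with $\mu_X(y\wedge z)$. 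Proving this comparison between the two notions of distance is the crux of the lemma, and I expect it to require the equalising structure, so that radii and hence the relevant local measures transition without distortion, rather than following from formal properties of measures alone.
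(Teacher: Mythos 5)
Your verification of non-negativity, symmetry, and the identity of indiscernibles is fine, and your two ``pure'' regimes of the triangle inequality are also sound (all three joins exist: nestedness of balls plus monotonicity of $\mu_X$; no join exists: concatenation of paths plus subadditivity). The genuine gap is exactly where you flag it: the mixed regime is never proven, only described, and a plan ending with ``I expect it to require the equalising structure'' is not a proof. Worse, the repair you sketch cannot work as stated. If $y$ and $z$ share a small ball inside a chart $U_\alpha$, your connecting path $\sigma(y)\leadsto\{\alpha\}\leadsto\sigma(z)$ in $N(\mathcal{A})$ forces the entire chart $U_\alpha$ into the union $U_\gamma$, so the incremental measure it contributes is of the order of $\mu_X(U_\alpha)$; this can never be bounded by $\mu_X(y\wedge z)$, since the join can be made arbitrarily small while $\mu_X(U_\alpha)$ is a fixed positive number. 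The equalising property only controls radii under transition maps; it provides no comparison between ball measures and chart measures, so the ``compatibility estimate'' you hope for is not available.

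In fact the mixed case can genuinely fail under all of the paper's standing hypotheses, so the gap is not closable by more cleverness. Sketch: take a one-dimensional compact example with four charts, nerve edges $\{1,2\},\{1,3\},\{2,4\},\{3,4\}$, no triple overlaps, and all transition maps isometries (so the atlas is equalising and $\mu_X$ is locally Haar); let $\phi_1(U_1)=O_K$ with $\phi_1(U_{12})$ and $\phi_1(U_{13})$ disjoint balls of radius $q^{-2}$ at mutual distance $q^{-1}$, and arrange $\mu_X(U_1)=1$, $\mu_X(U_2)\ge 1$, while $U_3$, $U_4$ have measure of order $q^{-2}$. For $x\in U_{12}$, $y\in U_{13}$ at distance $q^{-1}$ inside $U_1$, and $z\in U_4\setminus(U_2\cup U_3)$, one gets $d_g(x,y)=\mu_X(x\wedge y)=q^{-1}$ and $d_g(y,z)=O(q^{-2})$, yet every path from $\sigma(x)=U_{12}$ must pass through the vertex $U_1$ or the vertex $U_2$, so $d_g(x,z)\ge 1$; for $q\ge 3$ the triangle inequality fails. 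So your instinct that this case is the crux is correct, but it is a hole in the lemma itself, not merely in your argument. For comparison, the paper disposes of the entire statement with one sentence (``immediate due to the monotonicity of $\mu_X$ w.r.t.\ inclusion''), which at best covers the axioms you handled routinely and does not engage the triangle inequality at all; your attempt is more honest about where the difficulty lies, but it does not close it.
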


\begin{proof}
This is immediate due to the monoticity of the measure $\mu_X$ w.r.t.\ inclusion.
\end{proof}

\begin{definition}
The function $d_g$ is called the \emph{geodetic distance} on the $p$-adic analytic manifold $(X,\mathcal{A})$.
\end{definition}

\begin{definition}
The operator $\Delta^s$ given by
\begin{align*}
\Delta^s h(x)&=
\int_X d_g(x,y)^{-s}(h(x)-h(y)))\,\absolute{\omega(y)}
\end{align*}
with
for $x\in X$ and $s\in\mathds{R}$,
is called a \emph{$p$-adic Laplace-Beltrami operator}  on $X$, and defines a Laplacian integral operator on $\mathcal{D}(X)$.
\end{definition}

\begin{remark}
The connectedness of the nerve complex $N(\mathcal{A})$ is essential in order to  to be able to transition from anywhere to everywhere in the manifold $(X,\mathcal{A})$.
\end{remark}

An example compact manifold which is not contained inside a $p$-adic ball is the projective $n$-space $\mathds{P}^n(K)$. The main reason is that its nerve complex, depicted in Figure \ref{NerveProjectivePlane}, is a $2$-dimensional simplex, viewed as a simplicial complex. This means that in the case of the $p$-adic projective $n$-plane with the usual atlas, the operator $\Delta^s$ is actually global, because every point is reachable from the maximal $2$-simplex. 
\newline

An example of manifold whose nerve complex is not a simplex, is given by
\[
Y=\mathds{P}^n(K)\setminus U_{0\dots n}=\mathds{P}^n(K)\setminus S_0^n\,,
\]
whose nerve complex $N(\mathcal{A}_Y)$ is the simplicial complex obtained by removing from the $n$-simplex $N(\mathcal{A}_{\mathds{P}^n(K)})$ (as a simplicial complex) the ``interior'' $n$-facet. In $Y$, two points are contained in a common simplex, if and only if they are inside an $n-1$-simplex or in any of its lower-dimensional facets. Notice, that $Y$ is a compact $p$-adic analytic sub-manifold of $(\mathds{P}^n(K),\mathcal{A})$ with atlas $\mathcal{A}_Y$ obtained from $\mathcal{A}$ by restricting the maps $\phi$ in the charts $(U,\phi)$ to the open $U\cap Y$ of $Y$. Notice that the weights on $N(\mathcal{A}_Y)$ are different from those on $N(\mathcal{A})$. 

%%%%%%%%%%%%%%%5555
\subsection{Spectrum of the local Vladimirov-Taibleson Operator}

In order to be able to say something about the $L^2$-spectrum of $\Delta^s$, define now a suitable notion of wavelet. And for this, the notion of \emph{ball} in $X$ is helpful.

\begin{definition}
A \emph{wavelet} on $X$ is a function
\[
\psi\colon X\to\mathds{C}
\]
supported on a ball $B(a)\subseteq X$, with $a\in X$, and for any given chart $(U_\alpha,\phi_\alpha)$ containing $B$, $\psi_\alpha$ is a Kozyrev wavelet supported in a ball  centred in $\phi_\alpha(a)\in K^n$.
\end{definition}

\begin{Lemma}
It holds true that
\[
\int_X\psi(x)\,\absolute{\omega(x)}=0
\]
for any wavelet $\psi$ on $X$.
\end{Lemma}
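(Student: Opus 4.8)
The plan is to reduce the global integral to a single chart and then to combine the defining mean-zero property of Kozyrev wavelets with the local constancy of the density of $\mu_X$. Since a wavelet $\psi$ is by definition supported on a ball $B=B(a)\subseteq X$, and since the definition of a ball guarantees at least one chart $(U_\alpha,\phi_\alpha)\in\mathcal{A}$ with $B\subseteq U_\alpha$, I would first invoke the chart-independence of integration on $X$ established in the function-spaces subsection (the chain of equalities following diagram (\ref{diagram})) to write
\[
\int_X\psi(x)\,\absolute{\omega(x)}=\int_{\phi_\alpha(B)}\psi_\alpha(z)\,\absolute{f_\alpha(z)}\,\absolute{dz},
\]
where $\omega_\alpha=f_\alpha\,dx$ is the local expression of $\omega$ and $\absolute{dz}$ is the Haar measure on $K^n$. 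This identity already shows the quantity is well defined independently of the chosen chart, so that the phrase ``for any given chart containing $B$'' in the definition of a wavelet is harmless.

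Next I would record the two analytic inputs. First, because $\omega$ is a nowhere-vanishing analytic $n$-form, its coefficient $f_\alpha$ is a nowhere-vanishing analytic function on $\phi_\alpha(U_\alpha)$; hence $\absolute{f_\alpha}$ is locally constant, by the isosceles property of the ultrametric applied to $\absolute{f_\alpha(z)-f_\alpha(z_0)}<\absolute{f_\alpha(z_0)}$ for $z$ close to $z_0$. Consequently $\absolute{f_\alpha}$ is constant on every sufficiently fine ball. Second, a Kozyrev wavelet $\psi_\alpha$ supported on a ball is constant on the children of that ball, i.e.\ on the balls one radius level finer, and satisfies $\int\psi_\alpha(z)\,\absolute{dz}=0$. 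Writing $\psi_\alpha=\sum_\nu c_\nu 1_{B_\nu}$ over the children $B_\nu$, which all carry equal Haar measure, this mean-zero property forces $\sum_\nu c_\nu=0$.

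With these in hand the computation is immediate as soon as $\absolute{f_\alpha}$ is constant on $\phi_\alpha(B)$: pulling the constant value $\absolute{f_\alpha(\phi_\alpha(a))}$ out of the integral gives
\[
\int_{\phi_\alpha(B)}\psi_\alpha(z)\,\absolute{f_\alpha(z)}\,\absolute{dz}=\absolute{f_\alpha(\phi_\alpha(a))}\int_{\phi_\alpha(B)}\psi_\alpha(z)\,\absolute{dz}=0,
\]
the last equality being the Kozyrev mean-zero property. The main obstacle, and the only point requiring real care, is precisely the presence of the weight $\absolute{f_\alpha}$: without constancy across the children of $B$ the weighted integral equals $\sum_\nu c_\nu\,\mu_X(B_\nu)$, which need not vanish from $\sum_\nu c_\nu=0$ alone. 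I would therefore argue that the relevant wavelets sit at a scale at which $\absolute{f_\alpha}$ is constant on $B$ --- which is available by the local constancy of $\absolute{f_\alpha}$, refining the wavelet support if necessary --- so that the children $B_\nu$ all have equal $\mu_X$-measure and the factorisation above is legitimate. Equivalently, the verification reduces to checking that, under the equalising structure, equal-radius children of a common ball carry equal $\mu_X$-measure, which is exactly what converts the weighted integral into the unweighted Kozyrev integral; this is the step I expect to spell out in detail.
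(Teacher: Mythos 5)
Your proposal follows the same route as the paper's own proof, which consists of a single sentence citing the mean-zero property of Kozyrev wavelets in $K^n$ and declaring the assertion immediate. What you add is the observation that this reduction is \emph{not} automatic: in a chart, $\mu_X$ is Haar measure weighted by the density $\absolute{f_\alpha}$, whereas the Kozyrev property is mean-zero with respect to \emph{unweighted} Haar measure. Writing $\psi_\alpha=\sum_\nu c_\nu 1_{B_\nu}$ over the children $B_\nu$ of the support ball, the integral in question equals $\sum_\nu c_\nu\,\mu_X(B_\nu)$, which $\sum_\nu c_\nu=0$ alone does not annihilate. You have correctly isolated the one step where something must actually be checked, and your treatment of the small-scale case (ultrametric constancy of $\absolute{f_\alpha}$, made uniform by compactness of the charts) is sound.

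However, the way you close that step leaves a genuine gap relative to the statement. First, ``refining the wavelet support if necessary'' proves a different lemma: the claim is for \emph{any} wavelet, and a wavelet comes with a fixed support ball which you are not free to shrink. Second, your fallback justification --- that the equalising structure forces equal-radius children of a ball to carry equal $\mu_X$-measure --- is not available: the equalising property is a condition on the transition maps only (balls map to balls of equal radius) and places no constraint whatsoever on the density of $\omega$. Indeed the unrestricted claim can fail. On a one-dimensional chart with image $O_K$ over $K=\mathds{Q}_p$, take $\omega=(z^2+p)\,dz$, which is analytic and nowhere vanishing on $O_K$; then $\absolute{z^2+p}$ equals $p^{-1}$ on $pO_K$ and $1$ on each child $a+pO_K$ with $a\not\equiv 0\bmod p$, so the top-scale Kozyrev wavelet taking the value $e^{2\pi i a/p}$ on $a+pO_K$ satisfies
\[
\sum_{a=0}^{p-1}e^{2\pi i a/p}\,\mu_X\!\left(a+pO_K\right)
=p^{-2}+p^{-1}\sum_{a=1}^{p-1}e^{2\pi i a/p}
=p^{-2}-p^{-1}\neq 0\,.
\]
So constancy of $\absolute{f_\alpha}$ on the support is a genuine hypothesis, not a formality: it holds for all sufficiently small balls, and it holds at every scale in the paper's examples ($\mathds{P}^n(K)$ and the elliptic-curve charts, where the density has constant absolute value on every ball), but not in general. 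Since the small-support wavelets are exactly the ones the paper uses to build $L^2(X,\mu_X)_w$, your argument does cover everything needed downstream; but as a proof of the lemma as literally stated it is incomplete, and your analysis in fact exposes that the paper's ``follows immediately'' conceals precisely this missing hypothesis.
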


\begin{proof}
Since this is a well-known fact for Kozrev wavelets in $K^n$, cf.\  \cite[Theorem 3.29]{XKZ2018} or \cite[Theorem 9.4.2]{AXS2010}, the assertion follows immediately.
\end{proof}

\begin{Lemma}\label{waveletEigenvalue}
Any wavelet $\psi$ on $X$ supported in a ball $B\subseteq X$ is an eigenfunction of the $p$-adic Laplacian $\Delta^s$ with eigenvalue
\[
\lambda_\psi = \int_{X\setminus B}d_g(x,y)^{-s}\,\absolute{\omega(y)}+\mu_X(B)^{1-s}\,,
\]
independently of the choice of any $x\in B$, and
where $s\in\mathds{R}$.
\end{Lemma}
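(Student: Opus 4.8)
The plan is to split the defining integral for $\Delta^s\psi(x)$ at the support ball $B$, writing
\[
\Delta^s\psi(x)=\psi(x)\int_{X\setminus B}d_g(x,y)^{-s}\,\absolute{\omega(y)}+\int_B d_g(x,y)^{-s}\bigl(\psi(x)-\psi(y)\bigr)\,\absolute{\omega(y)}\,,
\]
using that $\psi\equiv 0$ off $B$, so that the integrand of the first (non-local) part collapses to $d_g(x,y)^{-s}\psi(x)$. The two pieces are then treated by different mechanisms: the non-local piece by a geometric invariance of $d_g$, and the local piece by the hierarchical structure of Kozyrev wavelets.

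For the non-local piece, the key observation is that for every $y\in X\setminus B$ the value $d_g(x,y)$ is independent of the choice of $x\in B$. When a ball of $X$ contains both $x$ and $y$, this follows from ultrametric nesting: since $y\notin B$, the join $x\wedge y$ is a ball meeting $B$ of measure exceeding $\mu_X(B)$, hence contains $B$, so $x\wedge y$ is the smallest ball containing $B\cup\mathset{y}$ and is independent of $x\in B$; thus so is $d_g(x,y)=\mu_X(x\wedge y)$. When no common ball exists, one uses that $B$, being a ball contained in a single chart, lies in a fixed family of simplices, so $\sigma(x)=\sigma(B)$ is constant on $B$; consequently the admissible paths $\gamma\colon\sigma(x)\leadsto\sigma(y)$ and the sets $U_\gamma(x,y)$ depend only on $B$ and $y$, and the minimum defining $d_g(x,y)$ is again independent of $x\in B$. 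Therefore the non-local integral equals the constant $\int_{X\setminus B}d_g(x,y)^{-s}\,\absolute{\omega(y)}$, finite because $d_g(x,y)\ge\mu_X(B)>0$ there and $X$ is compact, and the non-local contribution is exactly $\psi(x)$ times this constant.

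For the local piece I would decompose $B$ into the decreasing chain $B=B_0\supset B_1\supset\cdots$ of sub-balls containing $x$, with $B_{k+1}$ the child of $B_k$ containing $x$. On the annulus $B_k\setminus B_{k+1}$ one has $x\wedge y=B_k$, so the kernel takes the constant value $\mu_X(B_k)^{-s}$, and the integral telescopes into $\sum_k\mu_X(B_k)^{-s}\int_{B_k\setminus B_{k+1}}(\psi(x)-\psi(y))\,\absolute{\omega(y)}$. Since a Kozyrev wavelet with support ball $B$ is constant on the children of $B$ and has mean zero, $\psi(x)-\psi(y)$ vanishes for $y\in B_1$, leaving only the term $k=0$; evaluating it with $\int_{B_0}\psi\,\absolute{\omega}=0$ (the preceding Lemma) and $\psi|_{B_1}=\psi(x)$ yields precisely $\psi(x)\,\mu_X(B)^{1-s}$. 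Adding the two pieces gives $\Delta^s\psi(x)=\lambda_\psi\psi(x)$ with the claimed $\lambda_\psi$ for $x\in B$; for $x\notin B$ the same invariance of $d_g$ (now in the second argument) shows $d_g(x,y)$ is constant as $y$ ranges over $B$, so $\Delta^s\psi(x)$ is that constant to the power $-s$ times $-\int_B\psi\,\absolute{\omega(y)}=0$, again matching $\lambda_\psi\psi(x)=0$.

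The main obstacle I anticipate is the geometric invariance underlying the non-local piece, namely that $d_g(x,y)$ is constant in $x\in B$ for $y$ outside $B$. In the join case this rests on the equalising property of $\mathcal{A}$, which makes the radius and the join well-defined and thus licenses the nesting argument; in the path case it requires that $\sigma(x)$ be constant on $B$, which I would secure by taking $B$ inside a single chart so that membership of $B$ in each intersection $U_{i_0\dots i_k}$ is unambiguous. Everything else is a routine ultrametric telescoping, valid for all $s\in\mathds{R}$ since compactness keeps $d_g$ bounded away from $0$ and from $\infty$ on the relevant domains.
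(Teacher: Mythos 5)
Your proposal is correct and follows essentially the same route as the paper: the paper makes the identical split of the integral into the parts over $X\setminus B$ and over $B$, notes that ultrametricity of $d_g$ is only needed on $B$ (where your telescoping over the chain $B_0\supset B_1\supset\cdots$ is precisely the Kozyrev computation the paper simply cites), and dismisses the $x$-independence of the non-local term as immediate, which you instead argue via ball nesting and constancy of $\sigma(\cdot)$ on $B$. The only caveat is that your justification of that last point (``$B$ lies in a single chart'') is not by itself sufficient, since a ball inside one chart can still straddle a chart overlap so that $\sigma(x)$ varies over $B$; but this is a detail the paper's own proof also glosses over rather than a defect of your argument.
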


\begin{proof}
Independence on the choice of $x\in B$ is immediate.
The proof of  the eigenvalue formula in \cite[Theorem 3]{Kozyrev2004} carries over, because in the decomposition of the integral
\begin{align*}
\int_Xd_g(x,y)^{-s}&(\psi(x)-\psi(y))\,d\mu_x(y)
\\
&=\int_{X\setminus B}d_g(x,y)^{-s}(\psi(x)-\psi(y))\,d\mu_X(y)
\\
&+\int_Bd_g(x,y)^{-s}(\psi(x)-\psi(y))\,d\mu(y)\,,
\end{align*}
it is not needed that $d_g(x,y)$ is an ultrametric, if $x\in B$ and $y\in X\setminus B$. In that proof, it is only used that $d_g(x,y)$ is an ultrametric on the ball $B$.
\end{proof}

Define the function space
\[
L^2(X,\mu_X)_w=\left|
\begin{minipage}{9cm}
closure of the span of wavelets in $X$ with supports so small, that they do not overlap
\end{minipage}
\right|
\]
as a closed subspace of $L^2(X,\mu_X)$. The non-overlapping condition is realisable, because of 
Theorem \ref{finiteEqAtlas}.
The $L^2$-space has as a dense subspace
\[
\mathcal{D}(X)_w=\mathcal{D}(X)\cap L^2(X,\mu_X)_w\,,
\]
and, by Lemma \ref{waveletEigenvalue}, the operator $\Delta^s$ is densely defined on $L^2(X,\mu_X)_w$.
The following result can now be formulated:

\begin{thm}
The Hilbert space $L^2(X,\mu_X)_w$ has an orthonormal basis of eigenfunctions of $\Delta^s$ consisting of wavelets on $X$, where $s\in\mathds{R}$. Each eigenvalue has only finite multiplicity.
\end{thm}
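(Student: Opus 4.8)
The plan is to reduce everything to the local Kozyrev wavelet analysis on a ball of $K^n$, and then to read off both the basis property and the multiplicities from the explicit eigenvalue formula of Lemma~\ref{waveletEigenvalue}. First I would use compactness together with Theorem~\ref{finiteEqAtlas} to write $X$ as a finite disjoint union $X=\bigsqcup_{i=1}^M B_0^{(i)}$ of maximal balls of $X$, i.e.\ of radius $q^{-e(\mathcal{A})}$; since larger balls of $K^n$ no longer transition to balls of equal radius, no ball of $X$ straddles two of the $B_0^{(i)}$, and these are exactly the non-overlapping supports used in the definition of $L^2(X,\mu_X)_w$. Because $\omega$ is analytic and nowhere vanishing, each local density $\absolute{f_\alpha}$ is locally constant, so on sufficiently small sub-balls $\mu_X$ agrees with a constant multiple of the Haar measure; thus each $B_0^{(i)}$ carries, below some scale, the standard homogeneous ultrametric tree structure, and under the chart maps the wavelets of $X$ supported in $B_0^{(i)}$ are precisely the pull-backs of the Kozyrev wavelets on $\phi_\alpha(B_0^{(i)})\subseteq K^n$.

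Next I would assemble the basis. On each $B_0^{(i)}$ the Kozyrev system is an orthonormal basis of $L^2(B_0^{(i)},\mu_X)\ominus\mathds{C}\cdot 1_{B_0^{(i)}}$. Orthonormality of the global system is checked support by support: wavelets with disjoint supports are trivially orthogonal; for nested supports one wavelet is constant on the support of the other, which has vanishing integral by the mean-zero lemma, forcing orthogonality; wavelets with the same support are orthogonal by the classical Kozyrev relations. Completeness is immediate, since every wavelet has mean zero and lies in some $B_0^{(i)}$, so the closed span of all wavelets equals $\bigoplus_{i=1}^M\bigl(L^2(B_0^{(i)},\mu_X)\ominus\mathds{C}\cdot 1_{B_0^{(i)}}\bigr)=L^2(X,\mu_X)_w$. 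By Lemma~\ref{waveletEigenvalue} each basis element is an eigenfunction of $\Delta^s$, which settles the first assertion.

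The finite-multiplicity statement is where the real work lies, and I expect it to be the main obstacle. Here I would exploit that $\lambda_\psi$ depends only on the support ball $B$, so the multiplicity of a value $\lambda$ equals the number of wavelets whose support $B$ satisfies $\lambda(B)=\lambda$, and there are only $q^n-1$ wavelets per support ball. Splitting the integral of Lemma~\ref{waveletEigenvalue} as
\[
\lambda(B)=\int_{X\setminus B_0^{(i)}}d_g(x,y)^{-s}\,\absolute{\omega(y)}+\int_{B_0^{(i)}\setminus B}\mu_X(x\wedge y)^{-s}\,\absolute{\omega(y)}+\mu_X(B)^{1-s},
\]
the first term is a constant $C_0(B_0^{(i)})$ depending only on the maximal ball, and the second is a finite geometric sum over the tower of balls joining $B$ to $B_0^{(i)}$. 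Writing $\mu_X(B)=V_0^{(i)}q^{-nN}$ for $B$ at scale $N$ inside $B_0^{(i)}$, the whole expression is affine in $t_N=q^{-nN(1-s)}$ for $s\neq 1$ and linear in $N$ for $s=1$. A short computation gives the slope as proportional to $\tfrac{1-q^{ns}}{q^n-q^{ns}}$, which is nonzero precisely when $s\neq 0$, so $\lambda(B)$ is strictly monotonic, hence injective, in the scale $N$ within each fixed $B_0^{(i)}$ (and tends to $+\infty$ for $s\ge 1$). Consequently a given $\lambda$ is attained at only one scale per maximal ball; at that scale there are only finitely many balls by compactness, each carrying $q^n-1$ wavelets, and summing over the $M$ maximal balls yields finite multiplicity.

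The principal difficulty is thus the monotonicity of $\lambda(B)$ in the scale: it rests on the explicit geometric-series evaluation of the annular integral and on the local homogeneity of $\mu_X$, the latter being available only because $\omega$ is nowhere vanishing. I would also flag the degenerate value $s=0$, where $d_g^{-s}\equiv 1$ forces $\Delta^0$ to act as the scalar $\mu_X(X)$ on every wavelet; there the slope vanishes and the multiplicity is infinite, so the finite-multiplicity claim implicitly requires $s\neq 0$.
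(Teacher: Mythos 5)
Your proposal follows the same route as the paper's own (two--sentence) proof: orthonormality is checked chart-by-chart through the Kozyrev system and the ball-to-ball property of the transition maps, completeness is essentially built into the definition of $L^2(X,\mu_X)_w$ as a closure of a wavelet span, and finite multiplicity is extracted from Lemma~\ref{waveletEigenvalue}. You supply the substance the paper omits, and your slope formula $\frac{1-q^{ns}}{q^n-q^{ns}}$ is correct; in fact the monotonicity along towers needs no homogeneity of $\mu_X$ at all, since for consecutive balls $B'\subset B$ of a tower containing $x$ one has $d_g(x,y)=\mu_X(B)$ on $B\setminus B'$, hence
\[
\lambda(B')-\lambda(B)=\mu_X(B)^{-s}\bigl(\mu_X(B)-\mu_X(B')\bigr)+\mu_X(B')^{1-s}-\mu_X(B)^{1-s}
=\mu_X(B')\bigl(\mu_X(B')^{-s}-\mu_X(B)^{-s}\bigr)\,,
\]
which is nonzero for every $s\neq0$ regardless of the local density. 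However, there is a genuine gap in your count: monotonicity gives ``at most one scale per \emph{tower}'', while you conclude ``at most one scale per \emph{maximal ball}''. That stronger claim silently assumes $\lambda(B)$ depends only on the scale of $B$ and on which maximal ball contains it, which requires (i) the density $\absolute{f_\alpha}$ to be constant on the whole maximal ball (it is only locally constant, and the maximal balls need not be small, nor all of radius $q^{-e(\mathcal{A})}$), and (ii) the nerve term $\int_{X\setminus B_0}d_g(x,y)^{-s}\absolute{\omega(y)}$ to be independent of the position of $x$ inside $B_0$; but for $y\notin B_0$ the distance $d_g(x,y)$ is a minimum over paths starting at $\sigma(x)$, and $\sigma(x)$ can vary within $B_0$. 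This is repairable: if some $\lambda$ had infinite multiplicity, the supporting balls $B_k$ would have scales tending to infinity (finitely many balls per scale by compactness); for $s>1$ this contradicts $\lambda\ge\mu_X(B_k)^{1-s}\to\infty$, for $s=1$ the annular sum $\sum_j\bigl(1-\mu_X(B_{j+1})/\mu_X(B_j)\bigr)$ diverges, and for $s<1$, $s\neq0$, one compares $\lambda(B_k)$ with $F(x)=\int_Xd_g(x,y)^{-s}\absolute{\omega(y)}$, which is locally constant (hence takes finitely many values on compact $X$), while $F(x_k)-\lambda(B_k)$ is a fixed nonzero multiple of $\mu_X(B_k)^{1-s}$, which takes infinitely many values.

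The most valuable point in your proposal is the flag you raise at the end, and it should not be treated as a side remark: at $s=0$ the theorem as stated is \emph{false}. Lemma~\ref{waveletEigenvalue} gives, for every wavelet $\psi$ with support $B$,
\[
\lambda_\psi=\int_{X\setminus B}\absolute{\omega(y)}+\mu_X(B)=\mu_X(X)\,,
\]
so $\Delta^0$ acts as the scalar $\mu_X(X)$ on all of $L^2(X,\mu_X)_w$ and this single eigenvalue has infinite multiplicity. The paper's proof, which disposes of the multiplicity claim by simply citing the lemma, overlooks exactly this degenerate case; the second assertion of the theorem must carry the hypothesis $s\neq0$. With the repair above, your argument proves the theorem for all $s\neq0$ and correctly refutes it at $s=0$.
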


\begin{proof}
The orthonormal  property is given by the smallness of the supports, and by the corresponding property on the image of each chart map plus the fact that the images of balls map to balls via transition map.
The finiteness of the eigenvalue multiplicities follows from Lemma \ref{waveletEigenvalue}.
\end{proof}

The subscript ${}_w$ is going to be used also for other function spaces via intersecting with $\mathcal{D}(X)_w$ or $\mathcal{D}(X,\mathds{R})_w$ which denotes the real-valued functions on $X$ which are locally constant (with compact support).

\begin{Lemma}\label{Feller}
The operator $-\Delta^{s}$ generates a Feller semigroup $e^{-t\Delta^s}$ with $t\ge0$ on $C(X,\mathds{R})_w$ for $s\in\mathds{R}$.
\end{Lemma}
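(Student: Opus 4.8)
The plan is to invoke the Hille--Yosida--Ray characterization of Feller generators: a densely defined operator $A$ on $C(X,\mathds{R})_w$ is closable and its closure generates a Feller semigroup as soon as (i) its domain is dense, (ii) it satisfies the positive maximum principle, and (iii) the range of $\lambda-A$ is dense for some $\lambda>0$. I would take $A=-\Delta^s$ with domain $\mathcal{D}(X,\mathds{R})_w$; its density in $C(X,\mathds{R})_w$ is inherited from the density of $\mathcal{D}(X)$ established after diagram (\ref{diagram}), using that Kozyrev wavelets are locally constant, so that the wavelet span already lies in $\mathcal{D}(X)$.

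The positive maximum principle is checked directly from the integral expression. If $f\in\mathcal{D}(X,\mathds{R})_w$ attains a non-negative maximum at a point $x_0\in X$, then $f(x_0)-f(y)\ge0$ for every $y\in X$, while $d_g(x_0,y)^{-s}\ge0$, the integrand vanishing where $d_g=0$. Hence
\[
-\Delta^s f(x_0)=-\int_X d_g(x_0,y)^{-s}\bigl(f(x_0)-f(y)\bigr)\,\absolute{\omega(y)}\le0,
\]
which is exactly the positive maximum principle for $A=-\Delta^s$. Only the non-negativity of $d_g^{-s}$ enters here, so the failure of $d_g$ to be an ultrametric away from individual balls---noted in the proof of Lemma \ref{waveletEigenvalue}---is irrelevant at this step.

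For the range condition I would use the wavelet eigenbasis. By Lemma \ref{waveletEigenvalue} a wavelet $\psi$ supported in a ball $B$ is an eigenfunction with eigenvalue
\[
\lambda_\psi=\int_{X\setminus B}d_g(x,y)^{-s}\,\absolute{\omega(y)}+\mu_X(B)^{1-s}>0,
\]
strictly positive because $\mu_X(B)>0$ and the integral is finite and non-negative: for $x\in B$ and $y\notin B$ the smallest ball containing both strictly contains $B$, whence $d_g(x,y)\ge\mu_X(B)>0$ and the kernel stays bounded. Fixing $\lambda>0$, one has $(\lambda-A)\psi=(\lambda+\lambda_\psi)\psi$ with $\lambda+\lambda_\psi>0$ on each wavelet, so $\lambda-A$ maps the span of non-overlapping wavelets bijectively onto itself; as this span is dense in $C(X,\mathds{R})_w$, the range of $\lambda-A$ is dense. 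The Hille--Yosida--Ray theorem then delivers the Feller semigroup $e^{-t\Delta^s}$, with strong continuity, positivity and contractivity all contained in its conclusion.

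The step I expect to be most delicate is not any of the three conditions in isolation but the verification that the defining integral for $\Delta^s$ is finite on all of $\mathcal{D}(X,\mathds{R})_w$ for every real $s$, in particular that the singular kernel $d_g(x,y)^{-s}$ is integrable near the diagonal when $s>0$. For a locally constant $f$ the difference $f(x_0)-f(y)$ vanishes on a ball about $x_0$, cancelling the singularity and leaving an integral over a region where $d_g$ is bounded below; making this cancellation uniform across charts---where the equalising property guarantees that transition maps preserve radii, so that balls in $X$ have a well-defined radius---is where the genuine bookkeeping sits.
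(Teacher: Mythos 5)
Your proposal is correct and structurally identical to the paper's proof in its first two steps: both run the Hille--Yosida--Ray criteria (dense domain, positive maximum principle, dense range of $\lambda+\Delta^s$), both take $\mathcal{D}(X,\mathds{R})_w$ as the domain, and both verify the positive maximum principle by the same sign argument on the integrand. Where you genuinely diverge is the range condition. The paper handles it by a case split with external references: for $s\le 0$ it notes $-\Delta^s$ is bounded and argues as in \cite{ZunigaNetworks}, and for $s>0$ it defers to the unbounded-operator argument of \cite{SchottkyDiff}. You instead give a uniform, self-contained argument: by Lemma \ref{waveletEigenvalue} each wavelet $\psi$ satisfies $(\lambda+\Delta^s)\psi=(\lambda+\lambda_\psi)\psi$ with $\lambda+\lambda_\psi>0$ for every real $s$ (since $\mu_X(B)^{1-s}>0$ and the off-ball integral is finite and non-negative), so $\lambda+\Delta^s$ maps the wavelet span onto itself, and that span is dense in $C(X,\mathds{R})_w$ essentially by the definition of the subscript $w$. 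This buys independence from the cited literature and eliminates the bounded/unbounded dichotomy, at the price of working only on the wavelet-generated space --- which is all the lemma claims anyway. One small repair your route needs: Kozyrev wavelets are complex-valued, so to stay inside the real space you should pass to real and imaginary parts; this is harmless because the kernel $d_g(x,y)^{-s}$ is real, hence $\Delta^s$ commutes with complex conjugation and $\mathrm{Re}\,\psi$, $\mathrm{Im}\,\psi$ are eigenfunctions with the same real eigenvalue. Finally, your closing point about finiteness of the defining integral on locally constant functions (local constancy cancels the near-diagonal singularity for $s>0$; compactness bounds the rest) addresses a well-definedness issue the paper leaves implicit, and your resolution is the right one.
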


\begin{proof}
The proof follows the lines of the proof of 
 \cite[Lemma 5.1]{SchottkyDiff}, in which we verify the criteria given by the Hille-Ray-Yosida Theorem, cf.\ \cite[Ch.\ 4, Lemma 2.1]{EK1986}

\smallskip
1. The domain of $-\Delta^s$ is dense in $C(X,\mathds{R})_w$. This is verified, as the domain is $\mathcal{D}(X,\mathds{R})_w$ which is dense in $C(X,\mathds{R})_w$.

\smallskip
2. $-\Delta^s$ satisfies the positive maximum principle. Let $h\in \mathcal{D}(X,\mathds{R})_w$, and $x_0\in X$ such that $h$ takes its maximum in $x_0$. By compactness of  $X$, such $x_0\in X$ exists. Then
\[
-\Delta^sh(x_0)\le
\int_X d(x,y)^{-s}(h(x_0)-h(x_0))\absolute{\omega(x)}\le 0\,,
\]
which shows that the positive maximum principle is satisfied.

\smallskip
3. The range $\Range(\eta I+\Delta^s)$ is dense in $C(X,\mathds{R})_w$ for some $\eta>0$. If $s\le0$, then $-\Delta^s$ is a bounded operator, and one can argue as in \cite[Lemma 4.1]{ZunigaNetworks}. In the case $s>0$, the operator is unbounded, and one can now argue as in the proof of \cite[Lemma 5.1]{SchottkyDiff} (by pretending one has the trivial group $\Gamma=1$ acting on $X$). This shows the denseness of the range in $C(X,\mathds{R})_w$.

\smallskip
Since all three criteria of the Hille-Ray-Yosida Theorem are satisfied, the assertion now follows.
\end{proof}

\begin{thm}
There exists a probability measure $p_t(x,\cdot)$ with $t\ge0$, $x\in X$ on the Borel $\sigma$-algebra of $X$ such that the Cauchy problem for the heat equation
\[
\left(\frac{\partial}{\partial t}+\Delta^s\right)u(x,t)=0
\]
having initial condition $u(x,0)=u_0(x)\in C(X,\mathds{R})_w$ has a unique solution in $C^1((0,\infty),X)_w$ of the form
\[
u(x,t)=\int_Xu_0(y)p_t(y,d\mu_X(y))\,.
\]
Additionally, $p_t(x,\cdot)$ is the transition function of a strong Markov process on $X$ whose paths are c\`adl\`ag.
\end{thm}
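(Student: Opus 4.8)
The plan is to derive the entire statement from Lemma \ref{Feller}, which already supplies the Feller semigroup $T_t=e^{-t\Delta^s}$; the substantive analytic work is encapsulated there, so what remains is essentially an application of the standard dictionary between Feller semigroups and Feller processes, together with the spectral information from the wavelet basis constructed above and from Lemma \ref{waveletEigenvalue}.

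First I would construct the transition function and check that it consists of probability measures. The generator annihilates constants, since $\Delta^s1(x)=\int_Xd_g(x,y)^{-s}(1-1)\,\absolute{\omega(y)}=0$; hence the semigroup of Lemma \ref{Feller} extends to all of $C(X,\mathds{R})$ by fixing the constant function $1$, and the extension remains a positive contraction semigroup which is conservative, i.e.\ $T_t1=1$. For fixed $t\ge0$ and $x\in X$ the map $u_0\mapsto(T_tu_0)(x)$ is then a positive linear functional on $C(X,\mathds{R})$; as $X$ is compact Hausdorff, the Riesz representation theorem yields a finite Borel measure $p_t(x,\cdot)$ with $(T_tu_0)(x)=\int_Xu_0(y)\,p_t(x,dy)$, and conservativity forces $p_t(x,X)=1$, so each $p_t(x,\cdot)$ is a probability measure. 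The Chapman--Kolmogorov relations for $(p_t)_{t\ge0}$ are exactly the semigroup law $T_{s+t}=T_sT_t$ read off at the level of measures.

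Next I would produce the solution of the Cauchy problem and establish uniqueness. Setting $u(x,t)=(T_tu_0)(x)$, the generator identity $\tfrac{d}{dt}T_tu_0=-\Delta^sT_tu_0$ gives $(\partial_t+\Delta^s)u=0$ with $u(\cdot,0)=u_0$ at once. For the regularity claim $u\in C^1((0,\infty),X)_w$ I would pass to the orthonormal wavelet basis furnished by the spectral theorem above: writing $u_0=\sum_jc_j\psi_j$ and using Lemma \ref{waveletEigenvalue} one gets $u(x,t)=\sum_jc_je^{-t\lambda_j}\psi_j(x)$, where all $\lambda_j>0$ (the eigenvalue formula is a sum of strictly positive terms for every $s\in\mathds{R}$). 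The rapid decay of $e^{-t\lambda_j}$ for large $\lambda_j$, combined with the finite multiplicity of each eigenvalue, is what must be used to justify differentiating the series term by term for $t>0$, giving the $C^1$ dependence on time. Uniqueness is then the standard uniqueness for the abstract Cauchy problem attached to a $C_0$-semigroup generator: a solution with zero initial datum has vanishing wavelet coefficients for all $t$, hence is identically zero.

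Finally I would invoke Feller theory to obtain the process: by the Feller property of Lemma \ref{Feller}, $(p_t)_{t\ge0}$ is a normal, time-homogeneous transition function, so by the standard construction it is the transition function of a Markov process on $X$, and a Feller process admits a c\`adl\`ag modification and satisfies the strong Markov property, cf.\ \cite[Ch.\ 4]{EK1986}. I expect the only genuinely delicate point to be the regularity assertion $u\in C^1((0,\infty),X)_w$: for a general $u_0\in C(X,\mathds{R})_w$, as opposed to one lying in the domain of the generator, differentiability in $t$ is not automatic for a mere $C_0$-semigroup and must be earned from the smoothing encoded in the spectral decomposition, i.e.\ from sufficiently fast growth of the eigenvalues $\lambda_j$. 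The Markov-process assertions, by contrast, are a direct citation to the Feller-process literature.
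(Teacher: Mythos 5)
Your proposal has the same skeleton as the paper's proof: everything is derived from Lemma \ref{Feller} plus the standard dictionary between Feller semigroups and Markov processes. The difference lies in how the intermediate steps are filled. The paper outsources them to the literature: it argues as in \cite[Theorem 4.2]{ZunigaNetworks} to obtain a uniformly stochastically continuous $C_0$-transition function satisfying condition (L) of \cite[Theorem 2.10]{Taira2009} which represents $e^{-t\Delta^s}$, and then invokes the correspondence between such transition functions and strong Markov processes with c\`adl\`ag paths. You instead build the kernels by hand (Riesz representation plus conservativity $T_t1=1$, Chapman--Kolmogorov from the semigroup law) and treat the Cauchy problem spectrally through the wavelet eigenbasis and Lemma \ref{waveletEigenvalue}. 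Your route is more self-contained and makes the solution formula explicit, and your spectral argument for uniqueness and for $C^1$-regularity in $t$ addresses points the paper never spells out; the paper's route, by contrast, gets uniform stochastic continuity and condition (L) --- exactly the hypotheses feeding the strong Markov and path-regularity conclusions --- for free from Taira's machinery, which you must re-import by citing \cite{EK1986}.

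One step of yours needs genuine repair: the claim that the semigroup ``extends to all of $C(X,\mathds{R})$ by fixing the constant function $1$'' and remains a positive contraction. First, positivity of the extension is not formal: if $f\in C(X,\mathds{R})_w$ and $f+c\ge0$, positivity of $T_t$ on $C(X,\mathds{R})_w$ says nothing about $T_tf+c$, because $f+c\notin C(X,\mathds{R})_w$; the correct fix is to rerun the positive maximum principle and the range condition on the enlarged domain $\mathcal{D}(X,\mathds{R})_w\oplus\mathds{R}\cdot1$ (this works precisely because $\Delta^s$ annihilates constants) and apply Hille--Yosida--Ray there. Second, $C(X,\mathds{R})_w\oplus\mathds{R}\cdot1$ need not be dense in $C(X,\mathds{R})$ --- the paper's wavelet space is restricted to wavelets with small non-overlapping supports --- so neither the Riesz representation theorem nor the Feller-process construction of \cite{EK1986} applies verbatim on all of $C(X,\mathds{R})$; one needs a positive (cofinal) extension of the functionals $u_0\mapsto T_tu_0(x)$, using that the subspace contains the constants, and must then verify that Chapman--Kolmogorov survives this extension. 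In fairness, the paper's own proof hides the same subtlety inside its citations, which are stated for semigroups on the full space of continuous functions; but since your proposal advertises a direct construction, this is the step where it currently falls short of a complete argument.
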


\begin{proof}
Following the proof of \cite[Theorem 5.2]{SchottkyDiff}, we can argue as follows:

\smallskip
Due to Lemma \ref{Feller}, $-\Delta^s$ is the generator of  a Feller semigroup on $C(X,\mathds{R})_w$. Arguing as in the proof of \cite[Theorem 4.2]{ZunigaNetworks}, there exists a uniformly stochastically continuous $C_0$-transition function $p_t(x,d\mu_X(y))$ satisfying condition (L) of \cite[Theorem 2.10]{Taira2009} such that
\[
e^{-t\Delta^s}h_0(x)=\int_X h_0(y)p_t(x,d\mu_X(y))\,,
\]
cf.\ \cite[2.15]{Taira2009}. Now, from the correspondence between transition functions and Markov processes, it follows that there exists a strong Markov process on $X$,
whose paths are c\`adl\`ag. 
\end{proof}

In fact, there is a heat kernel function associated with the operator $-\Delta^s$:

\begin{thm}\label{heatKernel}
The heat kernel function 
\[
H(t,x,y)=1+\sum\limits_{\psi}e^{-t\lambda_\psi}\psi(x)\overline{\psi(y)}
\]
for $\Delta^s$ with $s\in \mathds{R}$,
where $\psi$ runs over the wavelets on $X$,
exists for $x\neq y
\in X$, $t>0$. If $s>1$, then it also exists for $x=y$ and $t>0$, and
\[
H(t,\cdot,\cdot)\in L^2(X\times X,\mu_X\wedge\mu_X)
\]
in this case.
\end{thm}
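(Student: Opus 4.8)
The plan is to read the displayed expression as the spectral expansion of the integral kernel of the semigroup $e^{-t\Delta^s}$ relative to the orthonormal basis of $L^2(X,\mu_X)$ furnished by the spectral theorem proved above: the constant eigenfunction of eigenvalue $0$ (accounting for the summand $1$) together with the wavelets $\psi$, each an eigenfunction with eigenvalue
\[
\lambda_\psi=\int_{X\setminus B}d_g(x,y)^{-s}\,\absolute{\omega(y)}+\mu_X(B)^{1-s},\qquad B=\supp\psi,
\]
by Lemma~\ref{waveletEigenvalue}. Two elementary facts will drive everything: since both summands are non-negative one has the clean lower bound $\lambda_\psi\ge\mu_X(B)^{1-s}$; and a normalised Kozyrev wavelet obeys $\absolute{\psi(x)}^2=\mu_X(B)^{-1}$ for $x\in B$ and $\psi(x)=0$ for $x\notin B$. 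With these in hand, each assertion becomes a convergence estimate.

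For existence at $x\neq y$ I would argue that a wavelet with support $B$ contributes a non-zero term only if $x,y\in B$, i.e.\ only if $B$ contains the join $x\wedge y$. If $x\wedge y$ does not exist, no wavelet contributes and $H(t,x,y)=1$. If it does exist, then because $\mathcal{A}$ is finite and equalising the balls of $X$ containing $x\wedge y$ form a \emph{finite} nested chain, each carrying at most $q^n-1$ wavelets; hence the defining sum is finite and exists for every $t>0$ and every $s\in\mathds{R}$.

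For the diagonal $x=y$ with $s>1$, every wavelet whose support contains $x$ now contributes, and these supports run through arbitrarily small balls, so the sum is genuinely infinite. Organising wavelets by scale, at scale $k$ there is a unique ball $B_k\ni x$ of measure $\mu_k\to 0$ carrying at most $q^n-1$ wavelets, each contributing at most $e^{-t\mu_k^{1-s}}\mu_k^{-1}$. As $s>1$ makes $\mu_k^{1-s}=\mu_k^{-(s-1)}\to\infty$, this bound decays faster than any power of $\mu_k$, so $\sum_\psi e^{-t\lambda_\psi}\absolute{\psi(x)}^2<\infty$. For the $L^2$-membership I would use that the products $\psi(x)\overline{\psi'(y)}$ are orthonormal in $L^2(X\times X,\mu_X\wedge\mu_X)$ and that $H(t,\cdot,\cdot)-1$ uses only the diagonal ones, so Parseval gives
\[
\norm{H(t,\cdot,\cdot)-1}_{L^2(X\times X)}^2=\sum_\psi e^{-2t\lambda_\psi}.
\]
Grouping by scale and using that the balls of measure $\mu_k$ tile $X$, so there are $\asymp\mu_X(X)/\mu_k\asymp q^{nk}$ of them each with $\lambda_\psi\ge\mu_k^{1-s}\asymp q^{nk(s-1)}$, the right-hand side is dominated by $\sum_k q^{nk}e^{-2tq^{nk(s-1)}}$, which converges for $s>1$ since the doubly exponential decay in $k$ beats the geometric factor $q^{nk}$; the constant term is bounded, so $H(t,\cdot,\cdot)\in L^2(X\times X,\mu_X\wedge\mu_X)$.

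The main obstacle will be the scale-counting bookkeeping on the manifold rather than on a single ball: since $N(\mathcal{A})$ need not be a simplex, I must check that across the finitely many charts the balls of a fixed small measure really partition $X$ (validating the count $\asymp q^{nk}$) and that the Kozyrev normalisation $\absolute{\psi}^2=\mu_X(B)^{-1}$ is preserved through the transition maps, which is precisely where the equalising property of $\mathcal{A}$ enters. Once this is settled the analytic content is the single observation that $\lambda_\psi\ge\mu_X(B)^{1-s}$ blows up exactly when $s>1$, which is what separates the convergent on-diagonal and $L^2$ regime from the divergent regime $s\le 1$.
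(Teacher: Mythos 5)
Your proof is correct and follows essentially the same route as the paper: finiteness of the sum off the diagonal (only finitely many wavelet supports contain both $x$ and $y$), the lower bound $\lambda_\psi\ge\mu_X(\supp\psi)^{1-s}$ from Lemma~\ref{waveletEigenvalue} for on-diagonal convergence when $s>1$, and orthonormality of the products $\psi(x)\overline{\psi(y)}$ for the $L^2$-bound $\mu_X(X)^2+\sum_\psi e^{-2t\lambda_\psi}$. Your explicit scale-counting ($\asymp q^{nk}$ balls of measure $\asymp q^{-nk}$, each carrying at most $q^n-1$ wavelets, against the doubly exponential decay $e^{-2tq^{nk(s-1)}}$) is not a different method but a welcome sharpening: it supplies the multiplicity bookkeeping that the paper compresses into the phrase ``the eigenvalues increase unboundedly with shrinking $\supp(\psi)$'', which on its own would not suffice without such a count.
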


\begin{proof}
The convergence of the series $H(t,x,y)$ for $t>0$, and $x\neq y$ follows thus: since $X$ is compact, there are only finitely many wavelets $\psi$ such that 
\[
x,y\in\supp(\psi)\,,
\]
and so $H(t,x,y)$ is a finite sum in this case, hence convergent. This shows the existence of the heat kernel function outside the diagonal of $X\times X$.

\smallskip
If $s>1$, $t>0$, and $x=y$, then
\begin{align*}
H(t,x,x)&=1+\sum\limits_\psi e^{-t\lambda_\psi}\mu_X(\supp(\psi))^{-1}
\\
&\stackrel{(*)}{\le} 1+\sum\limits_\psi
\frac{e^{-\mu_X(\supp(\psi))^{1-s}\,t}}{\mu_X(\supp(\psi))}<\infty\,,
\end{align*}
where $(*)$ follows from Lemma \ref{waveletEigenvalue}. This shows the existence of the heat kernel function for $\Delta^s$ on all of $X\times X$.

\smallskip
For the $L^2$-property, it holds true that
\begin{align*}
\int_{X\times X}&\absolute{H(t,x,y)}^2d\mu_X(x)d\mu_X(y)
\le\mu_X(X)^2+
\sum\limits_\psi e^{-2t\lambda\psi}
<\infty\,,
\end{align*}
because the eigenvalues increase unboundedly with shrinking $\supp(\psi)$ in the case $s>1$, $t>0$.
\end{proof}

The heat kernel function can be written as
\[
H(t,x,y)=1+h(t,x,y)
\]
with 
\[
h(t,x,y)=\sum\limits_\psi e^{-t\lambda_\psi}\psi(x)\overline{\psi(y)}\,,
\]
for $t>0$. The associated Green function is given by
\[
G(x,y)=\int_0^\infty h(t,x,y)\,dt
\]
for $x,y\in X$.

\begin{cor}\label{Green}
The Green function for $-\Delta^s$ with $s>1$ exists, and has the form
\[
G(x,y)=\sum\limits_\psi \lambda_\psi^{-1}\psi(x)\overline{\psi(y)}
\]
for $x,y\in X$.
\end{cor}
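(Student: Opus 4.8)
The plan is to obtain $G$ by integrating the wavelet expansion of $h$ term by term in $t$. Since by definition $G(x,y)=\int_0^\infty h(t,x,y)\,dt$ with $h(t,x,y)=\sum_\psi e^{-t\lambda_\psi}\psi(x)\overline{\psi(y)}$, the assertion reduces to justifying the interchange
\[
\int_0^\infty\Bigl(\sum_\psi e^{-t\lambda_\psi}\psi(x)\overline{\psi(y)}\Bigr)\,dt
=\sum_\psi\psi(x)\overline{\psi(y)}\int_0^\infty e^{-t\lambda_\psi}\,dt
\]
and to evaluating the inner integral. First I would note that this integral is legitimate: by Lemma~\ref{waveletEigenvalue} the eigenvalue is a sum of a non-negative integral and the strictly positive term $\mu_X(\supp(\psi))^{1-s}$, so $\lambda_\psi>0$ for every wavelet $\psi$, whence $\int_0^\infty e^{-t\lambda_\psi}\,dt=\lambda_\psi^{-1}$. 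Granting the interchange, this already produces the claimed formula.

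The justification of the interchange I would split according to whether $x=y$. For $x\neq y$ the reasoning of Theorem~\ref{heatKernel} applies unchanged: by compactness of $X$ only finitely many wavelets satisfy $x,y\in\supp(\psi)$, so $h(t,x,y)$ is a finite sum, the swap is trivial, and $G(x,y)$ is the corresponding finite linear combination of $\lambda_\psi^{-1}\psi(x)\overline{\psi(y)}$. On the diagonal $x=y$ every summand $e^{-t\lambda_\psi}\absolute{\psi(x)}^2$ is non-negative, so Tonelli's theorem licenses the interchange without any further hypothesis and reduces the existence of $G(x,x)$ to the convergence of the numerical series $\sum_\psi\lambda_\psi^{-1}\absolute{\psi(x)}^2$.

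The main obstacle is precisely this diagonal convergence, and it is the point at which the regularity parameter $s$ enters decisively. I would control the series by the lower bound $\lambda_\psi\ge\mu_X(\supp(\psi))^{1-s}$ of Lemma~\ref{waveletEigenvalue} (obtained by discarding the non-negative integral term), exactly as in the proof of Theorem~\ref{heatKernel}. Grouping the wavelets by their common support ball $B$, using that each such ball carries only finitely many wavelets with $\absolute{\psi(x)}^2\le\mu_X(B)^{-1}$, and invoking Theorem~\ref{finiteEqAtlas} to guarantee that the measures of the nested balls containing $x$ decrease geometrically, one reduces $G(x,x)$ to a single geometric-type series in $\mu_X(B)$. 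Pinning down the precise range of $s$ for which this series converges — and thereby confirming that the stated threshold secures existence on the diagonal — is the technically delicate step; off the diagonal, where the expansion is already finite, no such issue arises and the formula is immediate.
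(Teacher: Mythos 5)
Your route is the same as the paper's: integrate the expansion of $h(t,x,y)$ term by term, use $\lambda_\psi>0$ (from Lemma~\ref{waveletEigenvalue}) to evaluate $\int_0^\infty e^{-t\lambda_\psi}\,dt=\lambda_\psi^{-1}$, handle $x\neq y$ by the finiteness argument of Theorem~\ref{heatKernel}, and reduce the diagonal to the convergence of $\sum_\psi\lambda_\psi^{-1}\absolute{\psi(x)}^2$. Up to that point your argument is correct, and your appeal to Tonelli is a cleaner justification of the interchange than the paper's purely formal term-by-term integration.

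The genuine gap is that you stop at exactly the step that carries the content of the corollary: you never show that $s>1$ makes the diagonal series converge. And in fact the estimate you set up does not close at that threshold. By Lemma~\ref{waveletEigenvalue}, $\lambda_\psi\ge\mu_X(B)^{1-s}$ for $B=\supp(\psi)$, hence $\lambda_\psi^{-1}\le\mu_X(B)^{s-1}$; combined with $\absolute{\psi(x)}^2\le\mu_X(B)^{-1}$ each term is bounded by $\mu_X(B)^{s-2}$. The balls containing a fixed $x$ have geometrically decreasing measures and each carries boundedly many wavelets, so your ``geometric-type series'' converges precisely when $s>2$, not $s>1$. Nor is this an artifact of a lossy bound: for $s>1$ the integral term in $\lambda_\psi$ is itself $O\bigl(\mu_X(B)^{1-s}\bigr)$, so $\lambda_\psi$ is comparable to $\mu_X(B)^{1-s}$ from both sides, the terms of the diagonal series are comparable to $\mu_X(B)^{s-2}$, and $G(x,x)$ actually diverges for $1<s\le 2$. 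You should know that the paper's own proof is defective at this very point: it asserts $\lambda_\psi^{-1}\absolute{\psi(x)\overline{\psi(y)}}\in O\left(\mu_X(\supp(\psi))^{s}\right)$, but multiplying $O\left(\mu_X(\supp(\psi))^{s-1}\right)$ by the bound $\mu_X(\supp(\psi))^{-1}$ gives exponent $s-2$, not $s$, and the stated threshold $s>1$ rests on that slip. So the honest conclusion of your own method is: off the diagonal the formula holds for every $s$ (the sum is finite); on the diagonal existence is established only for $s>2$, and neither your outline nor the paper's argument proves it for $1<s\le 2$.
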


\begin{proof}
The form of $G(x,y)$ is given formally by integrating each summand of $h(t,x,y)$. Since
\[
\lambda_\psi\in O\left(\mu_X(\supp(\psi))^{1-s}\right)
\]
for $\mu_X(\supp(\psi))\to 0$,
and
\[
\absolute{\psi(x)\overline{\psi(y)}}
\le\mu_X(\supp(\psi))^{-1}\,,
\]
it follows that
\[
\lambda_\psi^{-1}\absolute{\psi(x)\overline{\psi(y)}}\in O\left(\mu_X(\supp(\psi))^s\right)
\]
with $s>1$. Hence,
\[
\absolute{G(x,y)}\le\sum\limits_\psi
\lambda_\psi^{-1}\absolute{\psi(x)\overline{\psi(y)}}<\infty\,,
\]
and the convergence of $G(x,y)$ follows for $x,y\in X$.
\end{proof}

\begin{remark}
Again, as seen in the proof of Theorem \ref{heatKernel}, due to the compactness of $X$, the  Green function in its form of Corollary \ref{Green} is a finite sum, if $x,y\in X$ are two distinct points.
\end{remark}

%%%%%%%%%%%%%%%%%%%%%%
%\subsection{Diffusion on $p$-adic projective space}

%%%%%%%%%%%%%%%%
\section{Diffusion on an elliptic curve with good reduction}

Let $E$ be an elliptic curve over $K$ with $p\neq 2,3$, and given via its Weierstrass equation:
\[
E\colon y^2=x^3-27c_4x-54c_6\,,
\]
where it w.l.o.g.\ be assumed that $c_4,c_6\in O_K$. The curve is a projective algebraic subvariety of
the projective $n$-space $\mathds{P}^n$ over $K$, and the set $E(K)$ of $K$-rational points form a closed subset of the $p$-adic analytic manifold $\mathds{P}^2(K)$, locally given as the zero set of an algebraic equation in two variables.
Since all points of $E(K)$ are non-singular, it follows that $E(K)$ is a $1$-dimensional closed submanifold of $\mathds{P}^2(K)$. Cf.\ \cite[Chapter 2.4]{IgusaLocalZeta} for the construction of closed submanifolds and an atlas for such.
\newline

Notice that the zero element $O\in E(K)$ in the space of $K$-rational points $E(K)$ has the coordinates
\[
O=[0:1:0]\in\mathds{P}^2(K)
\]
w.r.t.\ to the Weierstrass equation.
\newline

The measure $\mu_E$ on $E(K)$ will be given through the invariant differential $1$-form
\[
\omega=\frac{dx}{2y}\,,
\]
which has no zeros on $E(K)$. It will be written as
\[
\mu_E=\absolute{\omega}=\frac{\absolute{dx}}{\absolute{2y}}\,,
\]
and will play a prominent role in the following subsections.
\newline

The main difference with the case of Tate elliptic curves dealt with in \cite{brad_thetaDiffusionTateCurve}, is that in the case of good reduction, there is no Tate uniformisation available, as explained in \cite{tate1995} or \cite[Chapter V.3]{Silverman1994}, and thus the theory of theta functions  from \cite[Chapter 5.1]{FvP2004} cannot be used. This necessitates a different approach, as will be explained below.

%%%%%%%%%%%%%%5
\subsection{$E(K)$ as a $p$-adic analytic manifold}

Assume that $E$ has good reduction. This means that the curve
\begin{align}\label{reducedWeierstrass}
\overline{E}\colon y^2=x^3-\overline{27}\,\overline{c}_4x-\overline{54}\,\overline{c}_6
\end{align}
defined over the residue field $k=O_K/\pi O_K$ is non-singular. In other words, $\overline{E}$ is an elliptic curve over $k$ whose Weierstrass equation is (\ref{reducedWeierstrass}), and whose origin is $\overline{O}=\left[\bar{0}:\bar{1}:\bar{0}\right]\in\mathds{P}^2(k)$.

\begin{thm}\label{SerreGR}
The $K$-rational points $E(K)$ of an elliptic curve $E$ with good reduction with $p\neq 2,3$, form a $1$-dimensional closed $p$-adic analytic submanifold of $\mathds{P}^2(K)$, whose Serre invariant is
\[
i(E(K))=\absolute{\overline{E}(k)}\mod (q-1)\,,
\]
where $q=\absolute{k}$ is the cardinality of the residue field
$k=O_K/\pi O_K$. 
\end{thm}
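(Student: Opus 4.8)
The plan is to identify $E(K)$ with a finite disjoint union of one-dimensional balls and to count these balls modulo $q-1$ via Serre's classification of compact $p$-adic analytic manifolds \cite{Serre1965}. The manifold and compactness assertions are essentially already recorded in the paragraph preceding the statement: $E(K)$ is closed in the compact space $\mathds{P}^2(K)$, hence compact, and since the discriminant of $E$ is nonzero the Jacobian criterion guarantees that at each $K$-rational point some partial derivative of the Weierstrass polynomial is a unit, so the $p$-adic implicit function theorem furnishes local analytic charts onto balls in $K$, making $E(K)$ a $1$-dimensional closed submanifold. The real content is the invariant. By Serre's theorem, any compact $n$-dimensional analytic manifold over $K$ is analytically isomorphic to a finite disjoint union of copies of $O_K^n$, and its Serre invariant $i(E(K))\in\mathds{Z}/(q-1)\mathds{Z}$ is exactly the number of these balls reduced modulo $q-1$. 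So it suffices to exhibit a decomposition of $E(K)$ into balls and count them.

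Next I would bring in the reduction map. Good reduction supplies a surjective reduction homomorphism $\mathrm{red}\colon E(K)\to\overline{E}(k)$ onto the $k$-points of the smooth reduction curve $\overline{E}$, and its fibres, the residue discs $\mathrm{red}^{-1}(\overline{P})$ for $\overline{P}\in\overline{E}(k)$, are open and closed in $E(K)$ and partition it into exactly $\absolute{\overline{E}(k)}$ pieces. The crucial step is to show that each residue disc is a single ball. For the identity component this is standard formal group theory (cf.\ \cite{Silverman1994}): the kernel of reduction $E_1(K)=\mathrm{red}^{-1}(\overline{O})$ is parametrised through the formal group law by the local parameter $t=-x/y$, yielding an analytic isomorphism $E_1(K)\xrightarrow{\sim}\pi O_K$, which is a ball. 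Translating inside the group $E(K)$ by any point lying over a given $\overline{P}$ carries $E_1(K)$ analytically onto $\mathrm{red}^{-1}(\overline{P})$, so every residue disc is analytically isomorphic to $\pi O_K\cong O_K$.

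Combining these steps, $E(K)$ is a disjoint union of $\absolute{\overline{E}(k)}$ one-dimensional balls, and feeding this into Serre's counting invariant gives
\[
i(E(K))=\absolute{\overline{E}(k)}\mod (q-1)\,,
\]
as claimed. The subtlety I expect to be the main obstacle is the well-definedness of the count: the number of balls is not an honest integer invariant, since a single ball $O_K$ splits into its $q$ cosets modulo $\pi$, i.e.\ into $q$ smaller balls, which changes the naive count by $q-1\equiv 0$. This is precisely why Serre's invariant lives in $\mathds{Z}/(q-1)\mathds{Z}$, and it is what legitimises reading off $i(E(K))$ from the residue-disc decomposition rather than obtaining only a representative. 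A secondary point to check carefully is that the analytic structure induced on $E(K)$ from the submanifold charts of $\mathds{P}^2(K)$ agrees with the one coming from the formal-group parametrisation of the residue discs, so that the disjoint-union decomposition is genuinely an isomorphism of $p$-adic analytic manifolds and not merely a set-theoretic partition.
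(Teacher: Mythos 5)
Your proof is correct, but it takes a genuinely different route from the paper. You decompose $E(K)$ directly into the $\absolute{\overline{E}(k)}$ residue discs of the reduction map, show each disc is a ball (formal group for $E_1(K)=\ker\rho\cong\pi O_K$, then translation by the group law for the other fibres), and read off the Serre invariant as the ball count modulo $q-1$. The paper instead quotes Weil's volume formula \cite[Theorem 2.2.5]{WeilAAG}, namely $\int_{E(K)}\absolute{\omega}=\tfrac{1}{q}\absolute{\overline{E}(k)}$ for the nowhere-vanishing invariant form $\omega$, and then uses the fact that the total volume, as an element of $\mathds{Z}[1/q]$, reduces modulo $q-1$ to the Serre invariant (since $q\equiv 1\bmod (q-1)$, the factor $1/q$ is harmless). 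The two arguments are close relatives: the proof of Weil's formula itself rests on the residue discs being analytic balls, which is exactly the decomposition you construct by hand --- indeed the paper invokes this very fact from Weil's proof later, when it asserts $\mu_E(E_1(K))=1/q$ and that $E_1(K)$ is an analytic disc. What the paper's route buys is brevity and a volume computation ($\mu_E(E(K))=\absolute{\overline{E}(k)}/q$) that is reused throughout Section 4 for the weights on the nerve complex; what your route buys is self-containedness and a strictly stronger intermediate statement (an explicit analytic isomorphism of $E(K)$ with a disjoint union of $\absolute{\overline{E}(k)}$ balls), at the cost of having to verify the compatibility of the formal-group charts with the submanifold structure, which you rightly flag. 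One small refinement: the well-definedness subtlety you raise (a ball splitting into $q$ sub-balls changes the count by $q-1$) is precisely the content of Serre's theorem, so citing \cite{Serre1965} there, as you do, fully discharges it.
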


\begin{proof}
The closed analytic submanifold property is given by the algebraic equations for $E(K)\subset\mathds{P}^2(K)$, cf.\ \cite[Chapter 2.4]{IgusaLocalZeta}.
According to \cite[Theorem 2.2.5]{WeilAAG}, it holds true that
\[
\int_{E(K)}\absolute{\omega}=\frac{1}{q}\absolute{\overline{E}(k)}\,,
\]
since $\omega$ is a nowhere vanishing differential $1$-form on $E(K)$. From  this, the asserted  value of the Serre invariant follows, since
$q\equiv 1\mod q-1$.
\end{proof}

Using the atlas $\mathset{U_0,U_1,U_2}$ of $\mathds{P}^2(K)$, one sees from the fact that
\[
\overline{O}=\left[\bar{0}:\bar{1}:\bar{0}
\right]
\]
is the only non-affine point of $\overline{E}(k)$, that
the set
\[
E(K)\cap (U_0\setminus U_{012})=\emptyset
\]
is void. Hence, a minimal atlas $\mathcal{A}_E$ of $E(K)$ having a connected nerve complex is given by
\[
O_1=E(K)\cap U_1,\quad
O_2=E(K)\setminus E_1
\]
with non-empty overlap $O_1\cap O_2$ whose measure is likely to depend on the particular Weierstrass equation of $E$. In any case, the weighted nerve complex associated with $\mathcal{A}_E$ is
given in Figure \ref{ENC}, where the measures are
\begin{align*}
\mu_E(O_1)&=\mu_1
\\
\mu_E(O_1\cap O_2)&=\mu_{1}-\frac{2}{q}
\\
\mu_E(O_2)&=\frac{2\absolute{\overline{E}(k)}-2}{q}\,,
\end{align*}
and Theorem \ref{SerreGR} has been used.
Notice that the atlas $\mathcal{A}_E$ has the equalising property.
\newline

\begin{figure}[ht]
\[
\xymatrix@=40pt{
\circled{\scriptsize$\mu_1$}
\ar@{-}[r]^{\hspace*{-7mm}\mu_{1}-\frac{1}{q}}
&
\circled{\rule[-4mm]{0pt}{10mm}\scriptsize$\frac{\absolute{\overline{E}(k)}-1}{q}$}
}
\]
\caption{A minimal weighted connected nerve complex for an elliptic curve with good reduction.}\label{ENC}
\end{figure}

The reduction map
\[
\rho\colon E(K)\to\overline{E}(k)
\]
is, according to \cite[Proposition 2.1]{Silverman1986}, a surjective group homomorphism. This means that the analytic manifold $E(K)$ is the disjoint union of translates of the set
\[
E_1(K):=\ker\rho=\rho^{-1}(\overline{O})\,,
\]
where $\overline{O}\in \overline{E}(k)$ is the origin of the reduced elliptic curve $\overline{E}$. It follows that
\begin{align}\label{measureKernelRed}
\mu_E(E_1(K))=\frac{1}{q}\,,
\end{align}
and $\mu_1$ is an integer multiple of $\frac{1}{q}$.
\newline

Let now $B\subset E(K)$ be a ball, and $y\in E(K)\setminus B$. There are three cases for the value of the geodesic distance
$d_g(B,y)$,
namely:
\newline

\noindent 
\emph{Case $B\subset E_1$.}
Then
\begin{align}\label{dgCase1}
d_g(x,y)=
\begin{cases}
\mu_E(x\wedge y),&x\wedge y\;\text{exists}
\\
\mu_1,&y\in O_1\setminus B_{\max}(x)
\\
\frac{1}{q}\absolute{\overline{E}(k)}\,,&y\in O_2\setminus O_1
\end{cases}
\end{align}
for $x\in B$, and with $B_{\max}(x)$ the largest ball in $E(K)$ containing $x\in B$.
\newline

\noindent
\emph{Case $B\subset O_1\cap O_2$.} Then
\begin{align}
d_g(B,y)=\begin{cases}
\mu_E(x\wedge y),&x\wedge y\;\text{exists}
\\
\mu_1-\frac{1}{q}\,,&y\in O_1\cap O_2\setminus B_{\max}(x)
\\
\frac{1}{q}\left(\absolute{\overline{E}(k)}-1\right)\,,&y\in O_2\setminus O_1\cap O_2
\\
\mu_1\,,&y\in O_1\setminus O_1\cap O_2
\end{cases}
\end{align}
for $x\in B$.
\newline

\noindent
\emph{Case $B\subset O_2\setminus O_1$.} Then
\begin{align}\label{dgCase3}
d_g(x,y)=\begin{cases}
\mu_E(x\wedge y),&x\wedge y\;\text{exists}
\\
\frac{1}{q}\left(\absolute{\overline{E}(k)}-1\right)\,,&y\in O_2\setminus B_{\max}(x)
\\
\frac{1}{q}\absolute{\overline{E}(k)}\,,&y\in O_1\setminus O_2
\end{cases}
\end{align}

\begin{Proposition}\label{GRwaveletEV}
The wavelet eigenvalues of $\Delta^s$ acting on $\mathcal{D}(E(K))$ are
\[
\lambda_\psi=\mu_E(B)^{1-s}+q^{1-s}\left(\mu(B_{\max}(x))^{1-s}-\mu_E(B)^{1-s}\right)+\kappa(s)
\]
for $B=\supp(\psi)\subset E(K)$ with
\[
\kappa(s)=\begin{cases}
a,&B\subset O_1\setminus O_2=E_1(K)
\\
b,&B\subset O_1\cap O_2
\\
c,&B\subset O_2\setminus O_1
\end{cases}
\]
with
\begin{align*}
a&=\mu_1^{-s}\mu_E(O_1\setminus B_{\max}(x))
+\left(\frac{\absolute{\overline{E}(k)}}{q}\right)^{1-s}
\\
b&=\left(\mu_1-\frac{1}{q}\right)^{-s}\mu_E\left(O_1\cap O_2\setminus B_{\max}(x)\right)
+\left(\frac{\absolute{\overline{E}(k)}-1}{q}\right)^{1-s}
+
\mu_1^{1-s}
\\
c&=\left(\frac{\absolute{\overline{E}(k)}-1}{q}\right)^{-s}\mu_E\left(O_2\setminus B_{\max}(x)\right)
+\left(\frac{\absolute{\overline{E}(k)}}{q}\right)^{1-s}\,,
\end{align*}
and with $x\in B$, $s\in\mathds{R}$.
\end{Proposition}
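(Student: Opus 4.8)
The plan is to reduce everything to Lemma~\ref{waveletEigenvalue}, which already gives
\[
\lambda_\psi=\int_{E(K)\setminus B}d_g(x,y)^{-s}\,\absolute{\omega(y)}+\mu_E(B)^{1-s}
\]
for $B=\supp(\psi)$ and any fixed $x\in B$. Since the summand $\mu_E(B)^{1-s}$ already appears in the claimed formula, the entire task is to evaluate the integral over $E(K)\setminus B$ and to identify it with $q^{1-s}\bigl(\mu_E(B_{\max}(x))^{1-s}-\mu_E(B)^{1-s}\bigr)+\kappa(s)$. The first step is to split the domain $E(K)\setminus B$ into the \emph{local} region $B_{\max}(x)\setminus B$, on which $x\wedge y$ exists and $d_g(x,y)=\mu_E(x\wedge y)$, and the \emph{global} region $E(K)\setminus B_{\max}(x)$, on which, by the case analysis (\ref{dgCase1})--(\ref{dgCase3}), the geodetic distance is piecewise constant with the values dictated by the position of $y$ in the two branches $O_1,O_2$ of the nerve graph of Figure~\ref{ENC}.

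For the local region I would argue exactly as in the Kozyrev computation underlying Lemma~\ref{waveletEigenvalue}: inside $B_{\max}(x)$ the function $y\mapsto\mu_E(x\wedge y)$ is constant on each concentric sphere around $x$, so $B_{\max}(x)\setminus B$ decomposes into finitely many annuli $B_j\setminus B_{j-1}$ between the consecutive balls $B=B_0\subset B_1\subset\dots\subset B_L=B_{\max}(x)$ through $x$. On the $j$-th annulus $d_g(x,y)=\mu_E(B_j)$, and summing $\mu_E(B_j)^{-s}\bigl(\mu_E(B_j)-\mu_E(B_{j-1})\bigr)$ over $j$ yields a $p$-adic geometric series whose value collapses to the term $q^{1-s}\bigl(\mu_E(B_{\max}(x))^{1-s}-\mu_E(B)^{1-s}\bigr)$; here one uses (\ref{measureKernelRed}) and the one-dimensionality of $E(K)$ to control the ratio of consecutive ball measures.

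For the global region I would treat the three cases separately. In each case the complement $E(K)\setminus B_{\max}(x)$ is partitioned, according to the relevant display among (\ref{dgCase1}), the analogous middle case, and (\ref{dgCase3}), into finitely many pieces on which $d_g(x,y)$ equals one of the constants read off from the weighted nerve complex, for instance $\mu_1$, $\mu_1-\tfrac1q$, or $\tfrac1q(\absolute{\overline{E}(k)}-1)$. Multiplying each such constant raised to the power $-s$ by the $\mu_E$-measure of the corresponding piece---obtained from the edge and vertex weights of Figure~\ref{ENC} together with Theorem~\ref{SerreGR}---and adding the results gives precisely the three values $a$, $b$, $c$ defining $\kappa(s)$. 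Combining the local and global contributions with the base term $\mu_E(B)^{1-s}$ then produces the asserted eigenvalue.

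The main obstacle is the bookkeeping in the global step: one must correctly identify, in each of the three cases, the measures of the truncated regions $O_1\setminus B_{\max}(x)$, $O_1\cap O_2\setminus B_{\max}(x)$ and $O_2\setminus B_{\max}(x)$, and match each to the right constant value of $d_g$, taking care that the branch of the nerve graph containing $x$ determines which distances occur and that overlaps are not double-counted. The local geometric-series summation, while routine, also demands attention to the exact measure ratios so that the coefficient $q^{1-s}$ emerges correctly rather than a neighbouring power of $q$.
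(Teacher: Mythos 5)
Your overall strategy---reduce to Lemma~\ref{waveletEigenvalue} and then evaluate the integral over $E(K)\setminus B$ by splitting it into the region $B_{\max}(x)\setminus B$, where the join exists, and the outer regions where $d_g$ is piecewise constant by (\ref{dgCase1})--(\ref{dgCase3})---is exactly the route the paper takes; its proof consists of precisely this one-line reduction. The problem is that the two evaluations you then \emph{assert} are false as stated, and they are where all the content lies. For the local part, with $\mu_E(B_j)=q^{j}\mu_E(B)$ one gets
\begin{align*}
\sum_{j=1}^{L}\mu_E(B_j)^{-s}\bigl(\mu_E(B_j)-\mu_E(B_{j-1})\bigr)
&=(1-q^{-1})\sum_{j=1}^{L}\mu_E(B_j)^{1-s}\\
&=\frac{1-q^{-1}}{1-q^{s-1}}\Bigl(\mu_E(B_{\max}(x))^{1-s}-\mu_E(B)^{1-s}\Bigr)
\end{align*}
for $s\neq 1$, and the coefficient $\frac{1-q^{-1}}{1-q^{s-1}}$ equals $q^{1-s}$ only when $q^{1-s}=2-q^{-1}$, which fails for generic $s$. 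The case $s=0$ makes the discrepancy plain: the integral is then just the measure $\mu_E(B_{\max}(x))-\mu_E(B)$ of the annular region, while the claimed term is $q$ times that; at $s=1$ the claimed term vanishes although the integral equals $(1-q^{-1})L\neq 0$. So the geometric series does not ``collapse'' to the asserted expression, and no rearrangement of the bookkeeping changes this.

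The global step has the same defect. Take $B\subset E_1(K)$: by (\ref{dgCase1}) the region $y\in O_2\setminus O_1$ contributes
\[
\left(\frac{\absolute{\overline{E}(k)}}{q}\right)^{-s}\mu_E(O_2\setminus O_1)
=\left(\frac{\absolute{\overline{E}(k)}}{q}\right)^{-s}\left(\frac{\absolute{\overline{E}(k)}}{q}-\mu_1\right),
\]
using the weights of Figure~\ref{ENC}, whereas the second summand of $a$ is $\bigl(\absolute{\overline{E}(k)}/q\bigr)^{1-s}$, i.e.\ the same power of the distance multiplied by the \emph{total} mass $\absolute{\overline{E}(k)}/q$ rather than by $\mu_E(O_2\setminus O_1)$; these differ by $\bigl(\absolute{\overline{E}(k)}/q\bigr)^{-s}\mu_1\neq 0$. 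So ``multiplying each constant by the measure of the corresponding piece'' does not reproduce the stated $a$, $b$, $c$ either. In short, your proposal defers the entire proof to two computations and asserts their outcomes; when the computations are actually carried out they produce different constants from those in the statement, so the attempt does not go through as written. (For what it is worth, the paper's own proof appeals to the same mechanical computation without performing it, so the mismatch you would have discovered is a substantive one; a complete argument must exhibit the sums, state the coefficients they genuinely give, and reconcile them with the formula to be proved.)
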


\begin{proof}
This follows from Lemma \ref{waveletEigenvalue} under the consideration of equalities (\ref{dgCase1})-(\ref{dgCase3}).
\end{proof}

%%%%%%%%%%%%
\subsection{Hearing the number of points on $\overline{E}(k)$}

Here, the case of an elliptic curve $E$ with good reduction will be studied over a local field $K$ with $p\neq 2,3$ and residue field $k$ of cardinality $q=p^f$ with $f>>0$. The number $q$ will be considered sufficiently large, if the reduction curve $\overline{E}(k)$ under the $2$-sheeted cover
\[
\overline{\lambda}\colon \overline{E}(k)\to\mathds{P}^1(k),\;
(x,y)\mapsto x\,,
\]
contains all the branch points. By the Riemann-Hurwitz formula, the cover $\overline{\lambda}$ has four ramification points: one of them 
is $\overline{O}$, and the other ones are the pre-images of the zeros of the polynomial $\bar{f}$ from the proof of Theorem \ref{SerreGR}. This means that the value for $\mu_1$ of the previous subsection can now be explicitly given as the 
 pre-image under $\rho$ of the unramified locus in $\overline{E}(k)$ w.r.t.\ the cover $\bar{\lambda}$. This is the open set $O_1\cap O_2$. Hence, the following measures hold true:
\begin{align*}
\mu_E(O_1\setminus O_2)=\frac{1}{q}\,,\quad
\mu_E(O_2\setminus O_1)=\frac{3}{q}\,,\quad
\mu_E(O_1\cap O_2)=\frac{\absolute{\overline{E}(k)}-4}{q}\,,
\end{align*}
and thus
\begin{align}\label{mu1}
\mu_1=\frac{\absolute{\overline{E}(k)}-3}{q}\,,\quad\mu_E(B_{\max}(x))=\frac{1}{q}
\end{align}
for any $x\in E(K)$. The latter follows by  translating the disc $E_1(K)$ within $E(K)$. That $E_1(K)$ is indeed an analytic disc follows from the proof of \cite[Theorem 2.2.5]{WeilAAG}, where the fibres of the reduction map of $n$-dimensional algebraic varieties over $O_K$ with good reduction are analytically isomorphic to $\pi O_K^n$.

\begin{Cor}\label{EVGRfbig}
The wavelet eigenvalues of $\Delta^s$ for an elliptic curve with good reduction with $q=p^f$ with $f>>0$ and $p\neq 2,3$ are
\[
\lambda_\psi(s)=1+\left(1-q^{1-s}\right)\mu_E(B)^{1-s}+\kappa(s)
\]
with 
\[
\kappa(s)=\begin{cases}
a(s),&B\subset E_1(K)
\\
b(s),&B\subset O_1\cap O_2
\\
c(s),&B\subset O_2\setminus O_1
\end{cases}
\]
and  $\psi$ a wavelet supported in $B\subset E(K)$, and
\begin{align*}
a(s)&=\left(\frac{\absolute{\overline{E}(k)}-3}{q}\right)^{-s}
\left(\frac{\absolute{\overline{E}(k)}-4}{q}\right)+\left(\frac{\absolute{\overline{E}(k)}}{q}\right)^{1-s}
\\
b(s)&=\left(\frac{\absolute{\overline{E}(k)}-4}{q}\right)^{-s}\left(\frac{\absolute{\overline{E}(k)}-5}{q}\right)+\left(\frac{\absolute{\overline{E}(k)}-1}{q}\right)^{1-s}+\left(\frac{\absolute{\overline{E}(k)}-3}{q}\right)^{1-s}
\\
c(s)&=\left(\frac{\absolute{\overline{E}(k)}-1}{q}\right)^{-s}\left(\frac{\absolute{\overline{E}(k)}-2}{q}\right)+\left(\frac{\absolute{\overline{E}(k)}}{q}\right)^{1-s}
\end{align*}
for $s\in\mathds{R}$.
\end{Cor}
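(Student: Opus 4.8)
The plan is to obtain this corollary as a direct specialisation of Proposition \ref{GRwaveletEV}: I substitute into the general wavelet-eigenvalue formula the explicit measures recorded in (\ref{mu1}) and in the displayed measure computations immediately preceding it, which hold precisely because $q=p^f$ with $f\gg 0$ forces all four branch points of $\overline\lambda$ to lie in distinct reduction fibres. Concretely, the universal value $\mu_E(B_{\max}(x))=\tfrac1q$ from (\ref{mu1}) collapses the middle term of the Proposition \ref{GRwaveletEV} formula. Indeed, inserting $\mu_E(B_{\max}(x))=\tfrac1q$ gives $q^{1-s}\mu_E(B_{\max}(x))^{1-s}=q^{1-s}q^{-(1-s)}=1$, so that
\[
\mu_E(B)^{1-s}+q^{1-s}\bigl(\mu_E(B_{\max}(x))^{1-s}-\mu_E(B)^{1-s}\bigr)
=1+\bigl(1-q^{1-s}\bigr)\mu_E(B)^{1-s},
\]
which is the claimed leading part of $\lambda_\psi(s)$. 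The remaining work is then to unwind $\kappa(s)$ case by case.

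For each of the three cases I would compute the set-theoretic differences $\mu_E(O_i\setminus B_{\max}(x))$ appearing in $a,b,c$ of Proposition \ref{GRwaveletEV} by subtracting $\mu_E(B_{\max}(x))=\tfrac1q$ from the node weights. In the case $B\subset E_1(K)=O_1\setminus O_2$, I use $\mu_1=\tfrac{\absolute{\overline{E}(k)}-3}{q}$, so $\mu_E(O_1\setminus B_{\max}(x))=\mu_1-\tfrac1q=\tfrac{\absolute{\overline{E}(k)}-4}{q}$, yielding $a(s)$. In the case $B\subset O_1\cap O_2$, the coefficient $\mu_1-\tfrac1q=\tfrac{\absolute{\overline{E}(k)}-4}{q}$ and the residual measure $\mu_E(O_1\cap O_2\setminus B_{\max}(x))=\tfrac{\absolute{\overline{E}(k)}-4}{q}-\tfrac1q=\tfrac{\absolute{\overline{E}(k)}-5}{q}$ reproduce $b(s)$, together with $\mu_1^{1-s}=\bigl(\tfrac{\absolute{\overline{E}(k)}-3}{q}\bigr)^{1-s}$. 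In the case $B\subset O_2\setminus O_1$, I use $\mu_E(O_2)=\mu_E(O_2\setminus O_1)+\mu_E(O_1\cap O_2)=\tfrac{\absolute{\overline{E}(k)}-1}{q}$, so $\mu_E(O_2\setminus B_{\max}(x))=\tfrac{\absolute{\overline{E}(k)}-2}{q}$, giving $c(s)$.

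Since Proposition \ref{GRwaveletEV} already carries the full structural content inherited from Lemma \ref{waveletEigenvalue}, there is no genuine analytic obstacle here; the proof is bookkeeping. The one point that deserves care, and which I would flag explicitly, is the justification that $\mu_E(B_{\max}(x))=\tfrac1q$ holds \emph{uniformly} in $x\in E(K)$: this rests on (\ref{measureKernelRed}) together with the fact that every maximal ball is a translate of the reduction fibre $E_1(K)$ under the group law, so that all maximal balls share the measure $\tfrac1q$. I would also note that the $f\gg 0$ hypothesis is exactly what guarantees the clean fibre count $\mu_E(O_2\setminus O_1)=\tfrac3q$ (the three affine branch points, distinct from $\overline O$), without which the measures $a,b,c$ would not simplify to the stated closed forms.
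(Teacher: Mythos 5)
Your proposal is correct and follows exactly the paper's own route: the paper proves this corollary as an immediate consequence of Proposition \ref{GRwaveletEV} by substituting the measures in (\ref{mu1}), which is precisely your computation (the paper merely omits the bookkeeping you spell out, including the collapse $q^{1-s}\mu_E(B_{\max}(x))^{1-s}=1$ and the case-by-case evaluation of $a(s)$, $b(s)$, $c(s)$). Your additional remarks on the uniformity of $\mu_E(B_{\max}(x))=\tfrac1q$ via translation of $E_1(K)$ and on the role of $f\gg 0$ are also exactly the justifications given in the paper's text preceding the corollary.
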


\begin{proof}
This is an immediate consequence of Proposition \ref{GRwaveletEV} using (\ref{mu1}).
\end{proof}

In order to be able to hear the number of points of $\overline{E}(k)$, use Corollary \ref{EVGRfbig} in order to extract this number from 
\[
\lambda_0(s)=\inf\mathset{\lambda_\psi(s)\mid\psi\;\text{wavelet eigenfunction of $\Delta^s$}}
\]
for $s\in\mathds{R}$.

\begin{thm}
Let $E$ be an elliptic curve over $K$ with $q=p^f$ and $f>>0$, $p\neq 2,3$. Then the number $\absolute{\overline{E}(k)}$ satisfies the following property:
\begin{align*}
\absolute{\overline{E}(k)}&=
\begin{cases}
\left(\frac{\displaystyle \raisebox{1mm}{$3$}}{\displaystyle\rule{0pt}{4mm}\lambda_0(s)-q^{s-1}}\right)^{\frac{1}{s-1}}+t_0+q\,,&\text{for some $t_0\in(0,5)$, if $s>>1$},
\\[5mm]
\frac{\displaystyle 6-4\lambda_0(1)}{\displaystyle 3-\lambda_0(1)}\,,&\text{if $s=1$},
\\[5mm]
\left(\frac{\displaystyle \raisebox{1mm}{$2$}}{\displaystyle\lambda_0(s)-1}\right)^{\frac{1}{s-1}}+t_0+q\,,&\text{for some $t_0\in(1,5)$ if $s<1$}\,,
\end{cases}
\end{align*}
and
\[
\left(\frac{2}{\lambda_0(s)-q^{s-1}}\right)^{\frac{1}{s-1}}+q
<
\absolute{\overline{E}(k)}
<
\left(\frac{3}{\lambda_0(s)-q^{s-1}}\right)^{\frac{1}{s-1}}+5+q
\]
if $s>1$.
\end{thm}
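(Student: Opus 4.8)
The plan is to read off $\lambda_0(s)$ explicitly from Corollary \ref{EVGRfbig} and then to invert the resulting relation for $\absolute{\overline E(k)}$. Writing $M=\absolute{\overline E(k)}$ and $B=\supp(\psi)$, the eigenvalue has the shape $\lambda_\psi(s)=1+(1-q^{1-s})\mu_E(B)^{1-s}+\kappa(s)$, where $\kappa(s)\in\{a(s),b(s),c(s)\}$ is selected by which of the three regions $E_1(K)$, $O_1\cap O_2$, $O_2\setminus O_1$ contains $B$. First I would minimise over the support radius. Since the admissible values of $\mu_E(B)$ are the powers $q^{-m}$ with $m\ge1$ and the maximal ball satisfies $\mu_E(B_{\max})=q^{-1}$, the factor $\mu_E(B)^{1-s}=q^{m(s-1)}$ is monotone in $m$ with sign governed by that of $1-q^{1-s}$; a short check shows $m=1$ is optimal whether $s>1$ or $s<1$, collapsing the radius-dependent part and leaving $\lambda_\psi(s)=q^{s-1}+\kappa(s)$ at the minimising radius (and $\lambda_\psi(1)=1+\kappa(1)$ when $s=1$, the radius dropping out entirely).

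Next I would minimise over the region, i.e.\ compare $a(s),b(s),c(s)$ as functions of $s$ and of $M$. Using (\ref{mu1}) and the Hasse parametrisation $M=q+1-a_E$ with $\absolute{a_E}\le 2\sqrt q$, each of $a,b,c$ is a sum of a \emph{near-fibre} term of the form $(\text{distance})^{-s}\cdot(\text{measure})$ and one or two \emph{far-fibre} terms $(\text{distance})^{1-s}$, the distances being the three geodetic values $\tfrac1q(M-1)$, $\mu_1=\tfrac1q(M-3)$ and $\tfrac1qM$ coming from (\ref{dgCase1})--(\ref{dgCase3}). Tracking the dominant term as $s$ varies identifies the minimising region, and removing the radius contribution isolates the single $M$-dependent quantity to be inverted; this is why the statement subtracts $q^{s-1}$ when $s>1$ and $1$ when $s<1$, in each case cancelling the leading $s$-dependent constant (for $s<1$ the term $q^{s-1}\to0$, so the surviving constant is a single far-fibre unit, whereas for $s>1$ it is $q^{s-1}$) and leaving an expression governed by $M$.

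The final step is the inversion. For $s=1$ the relation $\lambda_0(1)=1+\kappa(1)$ is a rational function of $M$, so solving a linear equation in $M$ produces the exact closed form $\tfrac{6-4\lambda_0(1)}{3-\lambda_0(1)}$. For $s\neq1$ the relation is transcendental and admits no closed-form inverse; here I would bracket the minimising $\kappa$ between the two values obtained by replacing the cluster of near geodetic distances by its extreme members, which produces the numerators $2$ and $3$ counting the adjacent reduction fibres, and then raise to the power $1/(s-1)$. Because the double cover $\overline\lambda$ has exactly four ramification points, the leftover ambiguity in the additive constant is confined to an interval of length at most $5$, yielding the stated $t_0\in(0,5)$ (respectively $t_0\in(1,5)$) and, for $s>1$, the two-sided estimate; the additive $+q$ simply reflects $M-q=1-a_E$.

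I expect the main obstacle to be the region comparison combined with uniform control of the inversion error. Deciding which of $a(s),b(s),c(s)$ is smallest is not uniform: the near-fibre terms carry factors of the type $(q/(M-k))^{s-1}$ whose magnitude crosses $1$ according to the sign of $M-q$, so the identity of the minimising region — and hence the precise constant removed — can switch as $s$ passes through $1$ and as $a_E$ changes sign. Converting the resulting non-linear relation into clean additive bounds with $q$-independent slack, valid for every $M$ in the Hasse window and every $s$ in the relevant range, is the delicate part, and it is precisely the ramification count of $\overline\lambda$ that keeps this slack bounded by the absolute constant $5$.
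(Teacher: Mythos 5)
Your overall skeleton coincides with the paper's: minimise $\lambda_\psi(s)$ over the support radius (the maximal ball $\mu_E(B)=q^{-1}$ wins, giving $\lambda_0(s)=q^{s-1}+m(s)$ with $m(s)=\min\{a(s),b(s),c(s)\}$), then pin down $m(s)$ via the Hasse bound, then invert. The problem is that the substantive content of the paper's proof is exactly the part you defer. The paper does not ``track the dominant term'' abstractly; for $s>1$ it derives from the Hasse bound the explicit two-sided estimate
\[
2\left(\frac{\absolute{\overline{E}(k)}}{q}\right)^{1-s}<m(s)<3\left(\frac{\absolute{\overline{E}(k)}-5}{q}\right)^{1-s},
\]
where $2$ and $3$ count the summands in $a,c$ versus $b$ and the extreme geodetic distances bound each summand; it then tightens the lower constant to $3$ for $s\gg1$, and applies continuity in the auxiliary variable $t\in[0,5]$ to write $m(s)=3\bigl((\absolute{\overline{E}(k)}-t_0)/q\bigr)^{1-s}$ with $t_0\in(0,5)$, which is what yields the exact formula with unknown $t_0$, the two-sided bound for general $s>1$, and the explicit rational identity at $s=1$ that is solved linearly. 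None of these computations appear in your write-up: you name the region comparison and the uniform control of the inversion as ``the main obstacle'' and stop there, so the numerators $2$ and $3$, the windows $(0,5)$ and $(1,5)$, and the $s=1$ closed form are asserted rather than proved. A proposal that correctly identifies the crux but does not carry it out is incomplete precisely where the theorem needs an argument.

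There is also an internal inconsistency in your $s<1$ case. You correctly observe that the radius minimisation puts the optimum at the maximal ball for \emph{every} $s\neq1$, so your setup gives $\lambda_0(s)=q^{s-1}+m(s)$ there too; but the statement for $s<1$ inverts $\lambda_0(s)-1$, not $\lambda_0(s)-q^{s-1}$. Your reconciliation---that $q^{s-1}\to0$ for large $q$---does not close this gap: replacing the subtracted constant $q^{s-1}$ by $1$ shifts the quantity to be inverted by $1-q^{s-1}\approx1$, which is of the same order of magnitude as $m(s)$ itself in this regime (all of $a(s),b(s),c(s)$ are $O(1)$ there), and after raising to the power $1/(s-1)$ this changes the resulting value of $\absolute{\overline{E}(k)}$ uncontrollably. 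The paper instead resolves the $s<1$ case by identifying $m(s)=b(s)=\lambda_0(s)-1$, i.e.\ it builds the subtracted $1$ into the identification of the minimising region, and then brackets $b(s)$ between $3\bigl((\absolute{\overline{E}(k)}-5)/q\bigr)^{1-s}$ and $3\bigl((\absolute{\overline{E}(k)}-1)/q\bigr)^{1-s}$ before inverting. Whatever one thinks of that step, your argument as written would lead to a formula with $\lambda_0(s)-q^{s-1}$ in the denominator, which is not the statement to be proved, so the $s<1$ claim does not follow from your proposal.
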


\begin{proof}
\emph{Case $s>1$.}
In this case, $\lambda_\psi(s)$ attains its minimum when $B=\supp\psi$ is maximal, i.e.\ $\mu(B)=\mu(B_{\max})=q^{-1}$. This implies
\[
\lambda_0(s)=q^{s-1}+m(s)\,,
\]
where 
\[
m(s)=\min\mathset{a(s),b(s),c(s)}
\]
with $s\in\mathds{R}$. Since it is assumed that $f>>0$, it can be assumed by the Hasse bound \cite[Theorem IV.9.4]{Milne2021}  (Riemann hypothesis for elliptic curves over finite fields) that
\[
\frac{\absolute{\overline{E}(k)}-t}{q}<\min\mathset{1,\frac{\absolute{\overline{E}(k)}-1}{q}}
\]
for $t>1$. It follows 
that
\[
2\left(\frac{\absolute{\overline{E}(k)}}{q}\right)^{1-s}<m(s)
<3\left(\frac{\absolute{\overline{E}(k)}-5}{q}\right)^{1-s}\,,
\]
which for $s>>1$ can be tightend to
\[
3\left(\frac{\absolute{\overline{E}(k)}}{q}\right)^{1-s}<m(s)
<3\left(\frac{\absolute{\overline{E}(k)}-5}{q}\right)^{1-s}\,,
\]
and thus
\[
m(s)=3\left.\left(\frac{\absolute{\overline{E}(k)}-t}{q}\right)^{1-s}\right|_{t=t_0}
\]
for some $t_0\in(0,5)$ by continuity of that function in the variable $t\in[0,5]$. It follows that
\begin{align*} %\label{N(E)s>>1}
\absolute{\overline{E}(k)}=\left(
\frac{3}{\lambda_0(s)-q^{s-1}}\right)^{\frac{1}{s-1}}+t_0+q
\end{align*}
in the case $s>>1$. In the general case of $s>1$, it follows that
\begin{align*} %\label{N(E)s>1}
\left(\frac{2}{\lambda_0(s)-q^{s-1}}\right)^{\frac{1}{s-1}}+q
<
\absolute{\overline{E}(k)}
<
\left(\frac{3}{\lambda_0(s)-q^{s-1}}\right)^{\frac{1}{s-1}}+5+q
\end{align*}

\noindent
\emph{Case $s=1$.} If $s=1$, then
\[
m(1)=1+\frac{\absolute{\overline{E}(k)}-5}{\absolute{\overline{E}(k)}-4}
=\lambda_0(1)-1\,,
\]
from which it follows that
\begin{align*} %\label{N(E)s=1}
\absolute{\overline{E}(k)}=
\frac{6-4\lambda_0(1)}{3-\lambda_0(1)}
\end{align*}
with $f>>0$.
\newline

\noindent
\emph{Case $s<1$.} If $s<1$, then 
\[
m(s)=b(s)=\lambda_0(s)-1,
\]
and thus
\[
3\left(\frac{\absolute{\overline{E}(k)}-5}{q}\right)^{1-s}
< m(s)
< 3\left(\frac{\absolute{\overline{E}(k)}-1}{q}\right)^{1-s}\,,
\]
which implies, similarly as in the case $s>>1$, that
\begin{align*} %\label{N(E)s<1}
\absolute{\overline{E}(k)}=\left(\frac{3}{\lambda_0(s)-1}\right)^{\frac{1}{s-1}}+t_0+q
\end{align*}
with $t_0\in(1,5)$. This proves the assertions.
\end{proof}

%%%%%%%%%%%%%%%%%%%%%%%%%%%%%
\section*{Acknowledgements}
The author wants to thank
Wilson Z\'u\~{n}iga-Galindo for 
asking questions which lead to this research. David Weisbart and \'Angel Mor\'an Ledezma are thanked for fruitful discussions.
Frank Herrlich 
%and Stefan K\"uhnlein 
is thanked for my formation in algebraic geometry many years ago.
%Andrew Bradley is thanked for the inception of interest in partial differential equations.

%%%%%%%%%%%%
\section*{Statements and Declarations}

\paragraph{Funding.}
This research is partially funded by the Deutsche Forschungsgemeinschaft under 
project number 469999674.

\paragraph{Competing Interests.} The author has  no relevant financial or non-financial interests to disclose.

\paragraph{Author Contributions.}
The author contributed to the study conception and design,
%. Material preparation, data collection and 
and analysis. 
%were performed by [full name], [full name] and [full name]. The first draft of the manuscript was written by [full name] and all authors commented on previous versions of the manuscript. 
The author wrote, read and approved the final manuscript.

%%%%%%%%%%%
\section*{Data availability}

There is no data associated with this manuscript.

%%%%%%%%%%%%%%%%
\bibliographystyle{plain}
\bibliography{biblio}

\end{document}